\numberwithin{equation}{section}
\newcommand{\rr}{\mathbb{R}}
\newcommand{\lan}{\langle}
\newcommand{\ran}{\rangle}
\newcommand{\be}{\begin{eqnarray*}}
\newcommand{\bel}{\begin{eqnarray}}
\newcommand{\ee}{\end{eqnarray*}}
\newcommand{\eel}{\end{eqnarray}}
\newcommand{\ba}{\begin{aligned}}
\newcommand{\ea}{\end{aligned}}
\newcommand{\de}{\Delta}
\newcommand{\al}{\alpha}
\newcommand{\na}{\nabla}
\newcommand{\ep}{\epsilon}
\newcommand{\pa}{\partial}
\newcommand{\wh}{\widehat}
\newcommand{\wt}{\widetilde}
\newcommand{\bu}{{\mathbf{u}}}
\newcommand{\myr}[1]{{#1}}
\newcommand{\myb}[1]{\textcolor{blue}{#1}}
\newtheorem{theorem}{Theorem}
\newtheorem{defn}{Definition}
\newtheorem{lem}{Lemma}
\newtheorem{rmk}{Remark}
\newtheorem{example}{Example}
\numberwithin{theorem}{section}
\numberwithin{defn}{section}
\numberwithin{remark}{section}
\numberwithin{example}{section}
\numberwithin{lem}{section}
\newcommand{\norm}[1]{\left\lVert#1\right\rVert}
\newcommand\Torus{{\mathbb T}}
\newcommand\Real{{\mathbb R}}
\newcommand{\dss}{\displaystyle}
\title{Stirring Speeds Up Chemical Reaction}
\date{\today}
\author{Siming He} \thanks{simhe@math.duke.edu, Department of Mathematics, Duke University}
\author{Alexander Kiselev}\thanks{kiselev@math.duke.edu, Department of Mathematics, Duke University}
\begin{document}
\begin{abstract}
We consider absorbing chemical reactions in a fluid \myr{flow} modeled by the coupled advection-reaction-diffusion equations. In these systems, the interplay between chemical diffusion and fluid transportation causes the enhanced dissipation phenomenon. We show that the enhanced dissipation time scale, together with the reaction coupling strength, determines the characteristic time scale of the reaction.
\end{abstract}

\maketitle
\section{Introduction}
Consider the advection-reaction-diffusion systems involving two types of chemicals on $ \mathbb{T}^2$
\begin{align}\label{Reaction-diffusion}
\left\{\begin{array}{ccc}\ba
\pa_t n_1+&\mathbf{u}\cdot \na n_1= \nu_1\de n_1-\epsilon n_1 n_{2},\\
\pa_t n_2+&\mathbf{u}\cdot\na n_2= \nu_2\de n_2-\epsilon n_2 n_1,\quad \na\cdot \mathbf{u}=0,\\
n_\al(0&,x,y)=n_{\al;0}(x,y),\quad \al\in\{1,2\}.\ea\end{array}\right.
\end{align}
Here $n_1,n_2$ denote the chemical densities/biological substances and the vector field $\mathbf{u}$ models the underlying fluid \myr{flow}. The parameters $\nu_\al,\,\ep\in(0,1]$ represent the diffusion coefficients and reaction coefficient, respectively. {If the units are non-dimensionalized, then $\nu_1,\,\nu_2$ are the inverse of the P\'{e}clet numbers and $\ep$ is the quotient between the Damk\"{o}hler number and the P\'{e}clet number (see, e.g., \cite{CrimaldiCadwellWeiss08}).}  The domain is normalized so that $\Torus^2=[-1/2,1/2]^2$.

The influence of the fluid flow on reaction rates is of high importance in many applications. Rigorous mathematical analysis of this question to date has been mostly focused on front propagation phenomena and bulk reaction rates in \myr{the single-species} setting, mostly in the context of a single equation with KPP-type, combustion or bistable nonlinearities. We refer to papers \myr{\cite{CKOR,BH,HZ, ABP, BHN-2, CKR00, ConstantinEtAl08, FKR, KS00, KiselevZlatos,ZlaPercol, ZlaSpeedup}} where further references can also be found. It has been established that the flow can have a strong influence on reaction rates, and the extent of the effect depends strongly on the structure and properties of the flow. Here we will work with the system \eqref{Reaction-diffusion} which, in contrast, models two
reacting densities that are not pre-mixed and disappear in reaction process (forming a new compound not tracked by the model). \myr{We are not aware of earlier results on the influence of fluid flows and diffusion on multi-species reaction speed (in the context of models where more than one reacting density function is involved).}

\myr{One motivation for studying the system \eqref{Reaction-diffusion}} is to gain insight into the marine animals' fertilization processes explored in the experimental papers \cite{Riffelletall04}, \cite{RiffellZimmer07}, and \cite{ZimmerRiffell11}. The fertilization cannot proceed unless the sperms and eggs meet. To alleviate the unpredictability of the underlying fluid stream, many marine animals' eggs (e.g., abalones)  emit chemical signals to guide their sperms. Thus the chemotactic attraction between gametes and passive transport by fluid play significant roles in the process. \myr{
In the experiments carried out in \cite{Riffelletall04,RiffellZimmer07,ZimmerRiffell11}, the scientists put the gametes of the abalone in a Taylor-Couette tank and studied the relation between the fertilization success rate and the magnitude of the fluid flow. As a result, a non-trivial connection is discovered between the two quantities. Furthermore, the scientists observe that there exists an optimal shear strength that optimizes the fertilization rate. However, the mathematical understanding of these experiments is lacking.
} Rigorous analysis of the impact of chemotactic attraction was initiated in \cite{KiselevRyzhik12}, \cite{KiselevRyzhik122}, and \cite{KiselevNazarovRyzhikYao20}. The authors proved that chemotactic attraction can significantly decrease the half-life of biological substances in the framework of their models. On the other hand, the role played by the passive fluid transport was investigated in \cite{CrimaldiCadwellWeiss08}, \cite{KiselevRyzhik12}, and \cite{KiselevRyzhik122}.

This paper aims to quantify the relationship between the half-life of the chemical/biological substances and the coefficients $\nu_\al, \, \ep$ involved in \eqref{Reaction-diffusion}. In this paper, we do not consider the chemotactic attraction effects.  
Instead, we focus on strongly mixing flows modeling turbulent regime and shear flows. \myr{Marine scientists believe that these flows play essential roles in various fertilization processes in the ocean (see, e.g., \cite{DennyShibata89} (turbulent regime) and  \cite{Riffelletall04,RiffellZimmer07,ZimmerRiffell11} (shear regime))}.   
We leave the analysis of the complete advection-reaction-diffusion systems subject to chemotactic attraction \myr{for} future work.

If the ambient fluid \myr{flow} $\mathbf{u}$ vanishes, the system \eqref{Reaction-diffusion} has two natural time scales, i.e., the diffusion time scale $O(\min_\al\nu_\al^{-1})$ and the reaction time scale $O(\ep^{-1})$.
The largest of these scales determines the typical time scale of the chemicals/biological substances. To see this, one can consider the initial configuration where the densities $n_{1;0}$ and $n_{2;0}$ are supported away from each other. Then it takes $O(\min_\al\nu_\al^{-1})$ time for the two types of gametes/chemicals to encounter \myr{one another}. Once the densities are mixed, significant reaction occurs on a time scale $O(\ep^{-1})$. 
\myr{To conclude, we come to the heuristic that the net reaction time scale is the sum of the diffusion time scale and the reaction time scale.  }

The system \eqref{Reaction-diffusion} possesses another time scale associated with the non-trivial fluid \myr{flow} $\mathbf{u}$. It is commonly referred to as the `enhanced dissipation time scale' in the fluid mechanics community. 
The enhanced dissipation phenomena naturally arise in the passive scalar equations 
\begin{align}\label{PS_introduction}
\pa_t f +\mathbf{u}\cdot \na f=&\nu \de f, \quad f(t=0,x ,y)=f_{0}(x,y ).
\end{align}
Let us consider \eqref{PS_introduction} in the periodic setting. 
Suppose the diffusion coefficient $\nu$ is small enough and suitable zero average constraints are enforced. In that case, the $L^2$ norm of the solutions to  \eqref{PS_introduction} decays to half of its initial value on a time scale that is much shorter than the diffusion time scale $O(\nu^{-1})$. This fast scale is the enhanced dissipation (time) scale associated with $\mathbf{u}$. \myr{In the two-species reaction model \eqref{Reaction-diffusion}, one expects the two chemical densities to be well-mixed after the enhanced dissipation scale. As a result, introducing ambient fluid flow advection can improve the net reaction time to the sum of the enhanced dissipation scale and the reaction time scale.  
}

We consider two types of vector fields $\bu$ which possess enhanced dissipation, i.e., the relaxation enhancing flows and the shear flows.

P. Constantin et al. \cite{ConstantinEtAl08} 
introduced the notion of relaxation enhancing (R.E.) flows. Under the zero average constraint, \begin{align}\label{average_zero_constraint_2D_flow}
\int_{\mathbb{T}^2} f_{0}  dxdy=0,
\end{align}
the flow $\bu$ is relaxation enhancing if the solutions to \eqref{PS_introduction} have enhanced dissipation phenomenon. In \cite{ConstantinEtAl08}, explicit criterion for the flow to be R.E. is provided. Some examples of R.E. flows are well-known. For instance, the weakly mixing flows are relaxation enhancing, see e.g., \cite{vonNeumann32,Kolmogorov53,Sklover67,  Fayad02, Fayad06}, and the references therein. In works \cite{ElgindiCotiZelatiDelgadino18} and \cite{FengIyer19}, an explicit connection between the mixing property of the fluid flows and the relaxation enhancing property is developed. It is worth noting that explicit constructions of flows with mixing property have attracted much attention, and we refer the interested readers to the works \cite{AlbertiCrippaMazzucato14,AlbertiCrippaMazzucato19, ElgindiZlatos18, YaoZlatos17, BedrossianBlumenthalPunshonSmith191}, and the references therein. Recently, J. Bedrossian et al.  \cite{BedrossianBlumenthalPunshonSmith19} 
showed that certain randomly forced solutions to the Navier-Stokes equations are relaxation enhancing. The result was further applied to derive the Batchelor spectrum in the turbulence theory \cite{BedrossianBlumenthalPunshonSmith192}.

If the vector field $\bu$ is the shear $\bu(x,y)=(u(y),0)$, the enhanced dissipation phenomenon is observed for solutions of \eqref{PS_introduction} subject to zero average constraint \begin{align} \int_\Torus f_0(x,y)dx= 0, \quad \forall y\in \Torus.\label{average_zero_constraint_shear}
\end{align}  In the work \cite{BCZ15}, the authors studied general shear flows' enhanced dissipation effect with the techniques of hypocoercivity \cite{villani2009}. In the paper \cite{Wei18}, the author combined a Gearhart-Pr\"{u}ss type lemma and resolvent estimate to derive the enhanced dissipation of shear flows. Recently,  the authors of \cite{AlbrittonBeekieNovack21} applied the H\"ormander hypoellipticity method to derive the enhanced dissipation estimates in the bounded channel and $\Torus^2$.

The enhanced dissipation phenomena are relevant in other contexts. For example, strong relaxation enhancing flows or shear flows suppress singularity formation in the advective chemotaxis models, see, e.g., \cite{KiselevXu15,BedrossianHe16,He}. Moreover, the enhanced dissipation effect is crucial in understanding the transition threshold in hydrodynamic stability, see, e.g., \cite{BMV14,BGM15I,BGM15II,BGM15III,BVW16, ChenLiWeiZhang18}.  Last but not least,  it was proven in  \cite{Bedrossian17} that a general version of the enhanced dissipation effect suppresses the echo chain instability appeared in nonlinear Landau damping, \cite{MouhotVillani11, BMM13,Bedrossian21}.

Now, we exploit the enhanced dissipation in our analysis of the advective-reaction-diffusion system \eqref{Reaction-diffusion}. Before diving into the details, we introduce some notational conventions.

\noindent
{\bf Notations: } Throughout the paper, the constants $C, C_i\geq 1$, $c\in(0,1)$ are independent of the solutions and the coefficients $\nu,\ep$. The explicit values of $C$'s change from line to line.  The notations $B_{(...)},\, \mathcal{B}_{(...)}$ represent specific bounds/thresholds, whose dependence will be specified. We use $dV$ to denote the volume element, i.e., $dV=dxdy$. The average of the function $f$ on the torus is $\overline{f}=\int_{\Torus^2} fdV$. Functions with subscript $(\cdot)_\sim$, $(\cdot)_{\neq}$ satisfy the zero average constraints \eqref{average_zero_constraint_2D_flow} and \eqref{average_zero_constraint_shear}, respectively.

We organize the main results by distinguishing between the relaxation enhancing flow regime and the shear flow regime. 

\noindent
\textbf{A) Relaxation Enhancing Flow Regime:}
We consider the passive scalar equation \eqref{PS_introduction} subject to the zero average constraint \eqref{average_zero_constraint_2D_flow} which is preserved by the dynamics. Here we provide a quantitative definition of relaxation enhancing flows.
\begin{defn}[$d(\nu)$-relaxation enhancing flows]
The vector field $\mathbf{u}(t,x,y)$ is $d(\nu)$-relaxation enhancing if there exists a threshold $\nu_0(\bu)>0$ such that for $\forall\nu\in(0,\nu_0]$, the solution  $f_{\sim}$ to the passive scalar equation \eqref{PS_introduction} subject to the zero average constraint \eqref{average_zero_constraint_2D_flow} decays as follows:
\begin{align}
||f_{\sim}(s+t)||_2\leq C ||f_{\sim}(s)||_2e^{-\delta  d(\nu)t},\quad \forall t,s\in[0,\infty),\quad \int_{\Torus^2} f_{0;\sim}dxdy =0.\label{Defn:d_nu_RE}
\end{align}The constants $\delta\in (0,1) $ and $C\geq 1$ depend only on the vector field $\bu$ and are independent of $\nu$. The enhanced dissipation rate $d(\nu)$ satisfies the relation $\lim_{\nu\rightarrow 0^+}\frac{\nu}{d(\nu)}=0$.
\end{defn}
We present two examples of relaxation enhancing flows.
\begin{example}[Stochastic Navier-Stokes Flow]
Consider the solutions to the following stochastic Navier-Stokes equations in dimension two\myr{:} 
\begin{align}
\pa_t \mathbf{u}+&(\mathbf{u}\cdot \na )\mathbf{u} +\na p=\de \mathbf{u}+F(t,\omega);\\
\na \cdot \mathbf{u}&=0,\quad \mathbf{u}(t=0,x)=\mathbf{u}_0(x).\label{stochastic_NS}
\end{align}
It was shown in the paper \cite{BedrossianBlumenthalPunshonSmith19} that under specific constraints on the noise, the solutions $\mathbf{u}$ to the equation \eqref{stochastic_NS} are almost surely $|\log\nu|^{-1}$-relaxation enhancing. To be precise, there exist constants $C(\mathbf{u}_0,\omega),\,\delta$, which may only depend on the initial data $\mathbf{u}_0$ and the random realization $\omega$,   such that the solutions to the passive scalar equation \eqref{PS_introduction} subject to the flow $\mathbf{u}$ undergo enhanced dissipation as follows: 
\begin{align}
||f_{\sim} (t)||_{L^2}\leq C (\mathbf{u}_0,\omega)||f_{0;\sim} ||_{L^2}e^{-\delta |\log \nu|^{-1}t},\quad \int_{\mathbb{T}^2} f _{0;\sim}dxdy=0.
\label{PS_ed_2D_mix_introduction}
\end{align}
We refer the interested readers to Theorem 1.3 and Remark 1.4 in the paper \cite{BedrossianBlumenthalPunshonSmith19} for details of the statement.
\end{example}
\begin{example}[Alternating Shear Flow]
Here we introduce another time dependent $\nu^{1/2}$-relaxation enhancing flow. 
We consider the  following alternating shear flow\myr{:}
\begin{align}\label{alternating_shear}
\mathbf{u}( t,x,y)=\sum_{k=0}^\infty&\varphi_{2k}(t)(\sin(2\pi y),0)+\sum_{k=0}^\infty\varphi_{2k+1}(t)(0, \sin(2\pi x)),
\end{align}
\begin{align}
&\varphi_{\ell}(t)=\left\{\begin{array}{ccc}\ba 1,&\quad t\in [(\ell+1/3)K\nu^{-1/2},(\ell+2/3)K \nu^{-1/2}],\\
\mathrm{smooth},&\quad [\ell K  \nu^{-1/2}, (\ell+1/3)K\nu^{-1/2}]\cup [(\ell+2/3)K \nu^{-1/2},(\ell+1)K \nu^{-1/2}),\\
0,&\quad \mathrm{others},\ea\end{array}\right.\\
&\varphi_\ell\in C_c^\infty,\quad \mathrm{support}( \varphi_\ell)\cap \mathrm{support}(\varphi_{\ell+1})= \emptyset,\quad\forall \ell\in\mathbb{N}.
\end{align}
Here $K$ is a universal constant greater than $1$. 
In the appendix, we show that if $\nu^{-1},\,K$ is large enough, the solutions $f_\sim$ to the passive scalar equation  \eqref{PS_introduction} associated with the alternating shear flow decay as follows\myr{:}
\begin{align}
||f_{\sim}  (s+t)||_2\leq 4 ||f_{ \sim} (s)||_2 e^{-\frac{\log 2}{2K}  \nu^{1/2}t},\quad  \int_{\mathbb{T}^2} f_{0;\sim}dxdy=0,\quad \forall s,t\in [0,\infty).\label{ED_alternating_shear}
\end{align}
To conclude, the alternating shear is $\nu^{1/2}$-relaxation enhancing. Moreover, the flow is $C^\infty$ in space and time. \myr{There are several different generalizations.}  The same construction with alternating shear flows in three coordinate directions provides $\nu^{1/2}$-relaxation enhancing flows in $\Torus^3$.  
One can also combine the alternating construction with the rough shear flows in \cite{Wei18,ColomboCotiZelatiWidmayer20} to obtain $|\log \nu|^{-\gamma}$-R.E. flows on $\Torus^2$ for some $\gamma>1$. \myr{We believe that introducing some delicate time-dependent ``phase shifts'' in the construction yields a smooth $\nu^{1/3}$-R.E. flow. In a recent preprint \cite{BlumenthalCotiZelatiGvalani22}, the authors created a smooth $|\log\nu|^{-2}$-R.E. flow by introducing a randomized phase shift into this construction.
}   
\end{example}

With these preparations, we are ready to  state the first main theorem.
\begin{theorem} \label{thm_1}
Consider solutions $n_1,\ n_2$ to the system \eqref{Reaction-diffusion} subject to initial condition $n_{1;0},\ n_{2;0}\in C^2(\Torus^2)$. Assume that the fluid flow is $d(\nu)$-relaxation enhancing. Further assume that $\overline{n_1}(0)\leq \overline{n_2}(0)$. 
\myr{If the total mass of the density $n_1$ is bounded from below on the time interval $[0,T]$, i.e.,
\begin{align}
\inf_{\forall t\in[0,T]}\|n_1(t)\|_{L^1(\Torus^2)}\geq \frac{1}{B}>0,\label{lower_bound}
\end{align}} 
then the following estimate holds on the same time interval\myr{:}
\begin{align}\label{n_1_L_1_decay}
||n_1(t)||_{L^1(\Torus^2)} \leq \frac{4}{3}||n_{1;0}||_{L^1(\Torus^2)}\exp\bigg\{-\frac{1}{C(\bu)B}\bigg(\sum_{\al\in\{1,2\}} d(\nu_\al)^{-1}|\log \nu_\al|+\ep^{-1}\bigg)^{-1}t\bigg\}.
\end{align}
\end{theorem}


\begin{rmk}
The estimates obtained in this paper do not require that the diffusion coefficients $\nu_\al$ are chosen small depending on the initial data, which was always assumed in the other work of enhanced dissipation in nonlinear systems, see, e.g., \cite{BedrossianHe16, CotiZelatiElgindiWidmayer20}. This is due to the fact that the system we consider is dissipative in nature. 
\end{rmk}
\begin{rmk}[Extra logarithmic factor]
The extra $|\log\nu_\al|$ factor is introduced to compensate for various constants appearing during the proof. In particular, when one derives the enhanced dissipation of the solutions in the $L^1$ space, our argument requires a loss in $|\log \nu_\al|$. 
\end{rmk}%

\myr{\begin{rmk}
If we set $B^{-1}=\frac{1}{2}\|n_1(0)\|_{L^1(\Torus^2)}$ in Theorem \ref{thm_1}, then the maximal time interval  $[0,T]$, on which the lower bound \eqref{lower_bound} holds, is commonly referred to as the half-life of the chemical $n_1$. In this case, we can state the decay estimate \eqref{n_1_L_1_decay} purely in terms of the initial data. 
\end{rmk}
}

The result above can be generalized to multi-species absorbing reactions. We consider the systems on $\Torus^2$:
\begin{align}
\pa_t n_\al=\nu_\al\de n_\al -\bu\cdot\na n_\al-\sum_{\beta\in\mathcal I}\ep_{\al\beta}n_\al n_{\beta},\quad n_{\al}(t=0,\cdot)=n_{\al;0}(\cdot),\quad \al, \beta\in\mathcal{I}.\label{system_of_reactions}
\end{align}
Here $\nu_\al>0$ are the diffusion coefficients of the chemicals and $\ep_{\al\beta}\geq 0$ are the reaction coefficients. The total number of chemical species is finite, i.e., $|\mathcal{I}|<\infty$. 
We make the following assumptions
\begin{align}
\min_{\al\in \mathcal{I}}||n_\al(t)||_{L^1}\geq \frac{1}{B_1};\label{Mass_lower_bound}
\end{align}
and
\begin{align}
1\leq\frac{\sum_{\al\in\mathcal{I} } ||n_\al(t)||_1}{\min_{\al\in\mathcal{I}} ||n_\al(t)||_1}\leq B_2.\label{Ratio_upper_bound}
\end{align} Here we note that  the second assumption can be derived from the first one, i.e.,
\begin{align}
1\leq\frac{\sum_{\al } ||n_\al(t)||_1}{\min_\al ||n_\al(t)||_1}\leq B_1{\sum_{\al } ||n_\al(0)||_1} .
\end{align}

\begin{theorem}\label{thm:Systems}
Consider solutions $\{n_\al\}_{\al\in\mathcal{I}}	$ to the system \eqref{system_of_reactions} subject to initial condition $\{n_{\al;0}\}_{\al\in \mathcal{I}}\in C^2(\Torus^2)$.  Assume that the fluid flow is $d(\nu)$-relaxation enhancing. If the assumptions \eqref{Mass_lower_bound}, \eqref{Ratio_upper_bound} hold on the time interval $[0,T]$, then \myr{for all $ t\in[ 0,T]$}, there exist constants $C(B_2), \, C(B_1,B_2,\bu)$ such that 
\begin{align}
\sum_{\al\in \mathcal{I}}||n_\al(t)||_1
\leq C_1(B_2)\sum_{\al\in \mathcal{I}}||n_{\al;0}||_1 \exp\left\{-\frac{1}{C_2(B_1, B_2,\bu)}\bigg(\max_{\al\in\mathcal{I}} \frac{|\log\nu_\al|}{d(\nu_\al)}+(\min_{\al\in \mathcal{I}}\max_{\beta\in \mathcal{I}}\ep_{\al\beta})^{-1}\bigg)^{-1}t\right\}.\label{n_all_L_1_decay}
\end{align}
\end{theorem}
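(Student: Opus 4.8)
The plan is to mimic the proof of Theorem~\ref{thm_1}, carried out for all species simultaneously, with the dissipative (``finite fuel'') structure of the system doing the work that a smallness assumption on $\nu_\al$ does elsewhere. Set $M(t):=\sum_{\al\in\mathcal I}\|n_\al(t)\|_1$ and $\ep_0:=\min_\al\max_\beta\ep_{\al\beta}$. First record the structural facts. Since each term $-\ep_{\al\beta}n_\al n_\beta$ vanishes on $\{n_\al=0\}$, the maximum principle keeps $n_\al\ge 0$ on $[0,T]$, so $\|n_\al(t)\|_1=\overline{n_\al}(t)$; moreover $n_\al$ is a subsolution of the linear advection--diffusion equation $\pa_t f+\bu\cdot\na f=\nu_\al\de f$, whence $\|n_\al(t)\|_\infty\le\|n_{\al;0}\|_\infty$ and, by parabolic smoothing (valid because $\bu$ is divergence free), $\|n_\al(t)\|_\infty\le C(\nu_\al t)^{-1}M(0)$ for $t>0$. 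Summing the equations and integrating over $\Torus^2$,
\[
\dot M(t)=-\sum_{\al,\beta}\ep_{\al\beta}\,\overline{n_\al n_\beta}\le 0,\qquad\qquad \int_0^T\sum_{\al,\beta}\ep_{\al\beta}\,\overline{n_\al n_\beta}\,dt\le M(0),
\]
and it is the second bound, together with the $L^\infty$ smoothing, that will make the final constants depend only on $B_1,B_2,\bu$ and not on the size of the data.

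Now write $n_\al=\overline{n_\al}+n_{\al;\sim}$. The means obey $\frac{d}{dt}\overline{n_\al}=-\sum_\beta\ep_{\al\beta}\overline{n_\al}\,\overline{n_\beta}-\sum_\beta\ep_{\al\beta}\,\overline{n_{\al;\sim}n_{\beta;\sim}}$, so summing,
\[
\dot M\ \le\ -\sum_{\al,\beta}\ep_{\al\beta}\,\overline{n_\al}\,\overline{n_\beta}\ +\ \Big|\sum_{\al,\beta}\ep_{\al\beta}\,\overline{n_{\al;\sim}n_{\beta;\sim}}\Big|.
\]
For the good term: for each $\al$ pick $\beta$ with $\ep_{\al\beta}=\max_{\beta'}\ep_{\al\beta'}\ge\ep_0$; by \eqref{Mass_lower_bound} and \eqref{Ratio_upper_bound}, $\sum_{\beta'}\ep_{\al\beta'}\overline{n_{\beta'}}\ge\ep_0\min_{\al'}\overline{n_{\al'}}\ge\ep_0 M/B_2$, hence $\sum_{\al,\beta}\ep_{\al\beta}\overline{n_\al}\,\overline{n_\beta}\ge(\ep_0/B_2)M^2\ge(\ep_0/B_1B_2)M$. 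Thus, modulo the bilinear error, $M$ decays at rate $\ep_0/(B_1B_2)$, which produces the $\ep_0^{-1}$ term in \eqref{n_all_L_1_decay}.

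For the fluctuations, $n_{\al;\sim}$ solves $\pa_t n_{\al;\sim}+\bu\cdot\na n_{\al;\sim}=\nu_\al\de n_{\al;\sim}-\big(\sum_\beta\ep_{\al\beta}n_\al n_\beta\big)_\sim$, and since $n_\al\ge 0$ the forcing satisfies $\sum_\al\big\|\big(\sum_\beta\ep_{\al\beta}n_\al n_\beta\big)_\sim\big\|_1\le 2\sum_{\al,\beta}\ep_{\al\beta}\,\overline{n_\al n_\beta}=-2\dot M$. I would use the $L^1$ form of enhanced dissipation --- which, as the Remark on the extra logarithmic factor indicates, follows by interpolating the $L^2$ bound \eqref{Defn:d_nu_RE} against parabolic smoothing at the cost of a $|\log\nu_\al|$ factor, giving a solution-operator bound $\|S^{\nu_\al}_{t,s}f_\sim\|_1\le C\|f_\sim\|_1\,e^{-c\,d(\nu_\al)|\log\nu_\al|^{-1}(t-s)}$ --- and combine Duhamel's formula with an integration by parts exploiting the finite-fuel bound. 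Writing $\lambda_{ED}:=c\min_\al d(\nu_\al)|\log\nu_\al|^{-1}=c\big(\max_\al|\log\nu_\al|/d(\nu_\al)\big)^{-1}$, this yields
\[
\Psi(t):=\sum_\al\|n_{\al;\sim}(t)\|_1\ \le\ C\,e^{-\lambda_{ED}t}M(0)\ +\ C\lambda_{ED}\int_0^t e^{-\lambda_{ED}(t-s)}M(s)\,ds,
\]
so that knowing $M(s)\lesssim M(0)e^{-\lambda s}$ with $\lambda\le\lambda_{ED}/2$ forces $\Psi(t)\lesssim_{\bu}M(0)e^{-\lambda t}$: the fluctuations inherit the decay of the mass, and this is the source of the $\max_\al|\log\nu_\al|/d(\nu_\al)$ term.

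It remains to close the loop. Fix $\lambda:=C_2^{-1}\big(\max_\al|\log\nu_\al|/d(\nu_\al)+\ep_0^{-1}\big)^{-1}$ with $C_2=C_2(B_1,B_2,\bu)$ large enough that $\lambda\le\tfrac12\min(\lambda_{ED},\ep_0/B_1B_2)$, and run a continuity argument with hypothesis $M(t)\le 2C_1(B_2)M(0)e^{-\lambda t}$: on the set where it holds the previous step gives $\Psi(t)\lesssim_{\bu}M(0)e^{-\lambda t}$, and bounding $\big|\sum_{\al,\beta}\ep_{\al\beta}\overline{n_{\al;\sim}n_{\beta;\sim}}\big|\le C\max_\al\|n_\al\|_\infty\,\Psi(t)$ and inserting the $L^\infty$ smoothing bound makes this error $\le\tfrac12(\ep_0/B_1B_2)M$ once $t$ is past an initial layer of length $O(\nu_{\min}^{-1})$ (on which one uses only $M(t)\le M(0)$); a Gr\"onwall step then recovers $M(t)\le C_1(B_2)M(0)e^{-\lambda t}$ and the bootstrap closes. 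One can equally argue in windows $[t,t+\tau]$ with $\tau\sim K(\max_\al|\log\nu_\al|/d(\nu_\al)+\ep_0^{-1})$, $K=K(B_1,B_2,\bu)$ large, mixing over the first half and reacting over the second to get $M(t+\tau)\le(1-c(B_2))M(t)$, and then passing from $\{M(n\tau)\}$ to all $t$ --- it is this last passage that produces the prefactor $C_1(B_2)>1$. The main obstacle is precisely the bilinear coupling $\sum_{\al,\beta}\ep_{\al\beta}\overline{n_{\al;\sim}n_{\beta;\sim}}$: being quadratic in the fluctuations it is a priori of size $\sim M^2$ and so competes with the linear good term exactly in the relevant range $M=O(1)$; the escape is the dissipative structure of the Remark noting that no smallness of $\nu_\al$ is needed --- the fluctuation forcing is finite fuel ($\le-\dot M$, with no $\|n_{\al;0}\|_\infty$ in it) so the fluctuations decay at the rate of $M$, and $L^1\!\to\!L^\infty$ smoothing replaces $\|n_{\al;0}\|_\infty$ by $M(0)$ after one diffusive time, which together keep all final constants functions of $B_1,B_2,\bu$ only.
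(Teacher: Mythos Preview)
Your overall architecture --- track $M(t)=\sum_\al\|n_\al\|_1$, split $n_\al=\overline{n_\al}+n_{\al;\sim}$, use the $L^1$ enhanced dissipation of Lemma~\ref{Lem:L1} together with the ``finite fuel'' bound $\sum_\al\|(\text{reaction})_\sim\|_1\le -2\dot M$ to control $\Psi(t)=\sum_\al\|n_{\al;\sim}\|_1$, then close on windows --- is exactly the paper's strategy, and your Duhamel estimate for $\Psi$ is equivalent to the paper's super-solution comparison \eqref{Relation_system}. The gap is in how you feed $\Psi$ back into the $\dot M$ inequality.

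You estimate the bilinear error by $\big|\sum_{\al,\beta}\ep_{\al\beta}\overline{n_{\al;\sim}n_{\beta;\sim}}\big|\le C\max_\al\|n_\al\|_\infty\,\Psi(t)$ and then invoke $L^1\!\to\!L^\infty$ smoothing. This does not close: smoothing only gives $\|n_\al\|_\infty\lesssim M(0)$ (never better, since $\|n_\al\|_\infty\ge\overline{n_\al}$), so your error is $\lesssim M(0)\,\Psi(t)$ while the good term is $\gtrsim(\ep_0/B_1B_2)M(t)$. When $M(0)$ is large these are not comparable, and the resulting constants depend on $M(0)$, not only on $B_1,B_2,\bu$. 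Moreover, the ``initial layer of length $O(\nu_{\min}^{-1})$'' you propose is the slow diffusive time, much longer than the target scale $d(\nu)^{-1}|\log\nu|+\ep_0^{-1}$; absorbing it into $C_1$ would force $C_1\ge e^{\lambda\,O(\nu_{\min}^{-1})}\to\infty$ as $\nu_{\min}\to 0$.

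The paper avoids any $L^\infty$ input by exploiting positivity more sharply: writing $n_{\al;\sim}=n_{\al;\sim}^+-n_{\al;\sim}^-$, one has $-\overline{n_{\al;\sim}n_{\beta;\sim}}\le \overline{n_{\al;\sim}^+ n_{\beta;\sim}^-}+\overline{n_{\al;\sim}^- n_{\beta;\sim}^+}$, and since $n_\al\ge 0$ forces $\|n_{\al;\sim}^-\|_\infty\le\overline{n_\al}$, the error is bounded by $\sum_{\al,\beta}\ep_{\al\beta}\big(\tfrac12\|n_{\al;\sim}\|_1\,\overline{n_\beta}+\tfrac12\|n_{\beta;\sim}\|_1\,\overline{n_\al}\big)$ --- the \emph{same} bilinear form as the good term $\sum_{\al,\beta}\ep_{\al\beta}\overline{n_\al}\,\overline{n_\beta}$, with $\overline{n_\al}$ replaced by $\tfrac12\|n_{\al;\sim}\|_1$. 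Thus it suffices to arrange $\|n_{\al;\sim}\|_1\le\tfrac12\,\overline{n_\al}$, which your window argument (or the paper's) delivers: on $[t_0,t_0+T_\star]$, either $M$ already dropped by a fixed fraction, or the reacted mass is $\le\tfrac{1}{8B_2}M_{all}(t_0)$, and then the super-solution/Duhamel bound plus $\overline{n_\al}\ge M_{all}(t_0)/B_2-\tfrac{1}{8B_2}M_{all}(t_0)$ gives $\|n_{\al;\sim}\|_1\le\tfrac{3}{8B_2}M_{all}(t_0)\le\tfrac12\,\overline{n_\al}$. From there $\dot M\le -\tfrac12\sum\ep_{\al\beta}\overline{n_\al}\,\overline{n_\beta}\le -\tfrac{\ep_0}{2B_1}M$, with constants depending only on $B_1,B_2,\bu$. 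Replace your $\|n\|_\infty$ step by this positivity trick and your proof goes through.
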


\noindent
\textbf{B) Shear Flow Regime: }
We consider the equation \eqref{Reaction-diffusion} subject to shear flow and diffusion coefficients $\nu=\nu_1=\nu_2$,
\begin{align}\label{Reaction-diffusion-shear}
\left\{\begin{array}{ccc}\ba
\pa_t n_1+&{u}(y)\pa_x n_1= \nu\de n_1-\epsilon n_1 n_{2},\\
\pa_t n_2+&{u}(y)\pa_x n_2= \nu\de n_2-\epsilon n_2 n_1,\\
n_\al(0&,x,y)=n_{\al;0}(x,y),\quad \al\in\{1,2\}.\ea\end{array}\right.
\end{align}
The enhanced dissipation time scale naturally arises in the passive scalar equations subject to shear flow:
\begin{align}
\pa_t f_{\neq}+u(y)\pa_x f_{\neq}=\nu\de f_{\neq},\quad f_{\neq}(t=0,\cdot)=f_{0;\neq}(\cdot), \quad \int_{\mathbb{T}} f_{0;\neq}(x,y)dx=0 \ \text{ for }\forall y\in \mathbb{T}.\label{PS_shear_introduction}
\end{align}
Here the subscript $(\cdot)_{\neq}$  emphasizes that the zero average constraint in \eqref{PS_shear_introduction} is enforced.
The zero average condition rules out the $x$-independent solutions to \eqref{PS_shear_introduction}, for which it turns into heat equation with diffusion coefficient $\nu$.

Now we abuse notation a bit and provide a quantitative definition of  shear flows with enhanced dissipation.
\begin{defn}[$d(\nu)$-relaxation enhancing shear flows]
The shear flow $\mathbf{u}(x,y)=(u(y),0)$ is $d(\nu)$-relaxation enhancing if there exists a threshold $\nu_0(u)>0$, such that for $\forall \nu\in(0,\nu_0]$, all solutions to the passive scalar equation $f_{\neq}$ \eqref{PS_shear_introduction}  decay as follows:
\begin{align}
||f_{\neq}(t)||_2\leq C ||f_{0;\neq}||_2e^{-\delta d(\nu)t},\quad \forall t\in[0,\infty),\quad \int_{\Torus} f_{0;\neq}(x,y)dxdy \equiv0,\quad \forall {y}\in \Torus.\label{Defn:d_nu_shear_RE}
\end{align} Here the constants $\delta\in(0,1)$ and $C$  depend only on the shear profile. The enhanced dissipation rate $d(\nu)$ satisfies the relation $\lim_{\nu\rightarrow 0^+}\frac{\nu}{d(\nu)}=0$.
\end{defn}

It is well-known that the enhanced dissipation rate $d(\nu)$ is closely related to the maximal vanishing order of the shear flow profile $u(y)$, see, e.g., \cite{BCZ15,Wei18,  ElgindiCotiZelatiDelgadino18, CotiZelatiDrivas19,He21, AlbrittonBeekieNovack21}.  Let us consider the shear flow $\mathbf{u}(x,y)=(u(y),0)$ with profile $u(y)$ that has finitely many critical points $\{y_k \}_{k=1}^N$.  We define the \emph{vanishing order} $j(k)$ associated with the critical point $y_k$ as the smallest integer such that
\begin{align}
\frac{d^{\ell}u}{dy^\ell}&(y_k )=0,\quad \frac{d^{j(k)+1}u}{dy^{j(k)+1}}(y_k)\neq 0,\quad \forall \myr{1\leq} \ell \leq j(k).
\end{align}
We further define the \emph{maximal vanishing order}  $j_{m}$ of the shear flow profile $u(y)$ to be  $j_{m}:=\max_{k=1}^N \{j(k)\}$.  Note that any smooth shear flow profile on the torus $\Torus$ must have at least one critical point and hence the maximal vanishing orders $j_{m}$ associated with them are greater than or equal to $1$. 

\ifx
If the shear flow is nondegenerate, J. Bedrossian and M. Coti-Zelati applied hypocoercivity  techniques \cite{villani2009} to show that for $\nu$ small enough, there exist universal constants $C,\delta$ such that the following estimate holds for solutions to the passive scalar equations \eqref{PS_shear_introduction}, (\cite{BCZ15})
\begin{align}
||f_{\neq}(t)||_{L^2}\leq C ||f_{0;\neq}||_{L^2}e^{-\delta  \nu^{1/2}|\log \nu|^{{-2}}t},\quad\forall \nu\in(0,\nu_0(u)], \,\,\,\forall t\geq0.\label{PS_ed_introduction}
\end{align}
D. Wei improved the result in the paper \cite{Wei18} by applying the resolvent estimate and a Gearhart-Pr\"{u}ss type lemma
\begin{align}
||f_{\neq}(t)||_{L^2}\leq C ||f_{0;\neq}||_{L^2} e^{-\delta \nu^{1/2}t},\quad\forall \nu\in(0,\nu_0(u)], \,\,\,\forall t\geq0.
\end{align}
Hence the nondegenerate shear flows are $\nu^{1/2}$-relaxation enhancing. In general, 
\fi

If the shear flow profile has maximal vanishing order $j_{m}$, the above-mentioned works \cite{BCZ15, Wei18,He21} and \cite{AlbrittonBeekieNovack21} provide the following enhanced dissipation estimates for the solutions to \eqref{PS_shear_introduction}
\begin{align}
||f_{\neq}(t)||_{L^2}\leq C ||f_{0;\neq}||_{L^2}e^{-\delta \nu^{\frac{j_{m}+1}{j_{m}+3}}t},\quad\forall \nu\in(0,\nu_0(u)], \,\,\,\forall t\geq0.\label{ED_shear_introduction} 
\end{align}
As a result, we see that the shear flow is $\nu^{\frac{j_{m}+1}{j_{m}+3}}$-relaxation enhancing.
In the paper \cite{CotiZelatiDrivas19}, M. Coti-Zelati and D. Drivas showed that 
this $d(\nu)$-enhanced dissipation rate is sharp.

To present the main theorem, we introduce the notions of the $x$-average and the remainder:
\begin{align}\label{x_average_and_remainder}
\lan f\ran(y)=\int_{\Torus} f(x,y)dx,\quad f_{\neq}(x,y)=f(x,y)-\lan f\ran(y).
\end{align}
Our main result in the shear flow regime is as follows.
\begin{theorem}\label{thm_3}
Consider the solutions $n_1, n_2$ to \eqref{Reaction-diffusion} subject to initial condition $n_{1;0},\ n_{2;0}\in C^2(\Torus^2)$. Assume that the shear  flow is $d(\nu)$-relaxation enhancing with decay rate $\delta d(\nu)$ and threshold $\nu_0$. Moreover, suppose that $||n_{1;0}||_{L^1}\leq ||n_{2;0}||_{L^1}$. 
 Then for $0<\nu\leq \nu_0$, there exist two characteristic times
 \begin{align}\label{t_0}
\mathcal{T}_0=\frac{1}{\delta  d(\nu)}\log \left(C\bigg(\frac{\epsilon}{\delta d(\nu)}\sum_\al||n_{\al;0}||_{L_{x,y}^2}+1\bigg)\frac{\sum_\al||n_{\al;0}||_{L_{x,y}^2}}{||\min_\al \lan n_{\al;0}\ran||_{L^1_y}}\right). 
\end{align}
and \begin{align}\label{t_1}
\mathcal{T}_1={\ep^{-1}C\max\bigg\{1,\norm{\min_{\al\in\{1,2\}}\lan n_{\al;0}\ran}_{L^1_y}^{-1}}\bigg\}=:\ep^{-1}\mathcal{B}_1
\end{align} such that significant mass is consumed by the time $\mathcal{T}_0+\mathcal{T}_1$:
\begin{align}\label{comsumption_est_thm_3}
||n_{1;0}||_{L^{1}(\Torus^2)}-||n_1(\mathcal{T}_0+\mathcal{T}_1)||_{L^1(\Torus^2)}\geq \frac{1}{12}\norm{\min_{\al\in\{1,2\}}\lan n_{\al;0 }\ran}_{L_y^1}. 
\end{align}
\end{theorem}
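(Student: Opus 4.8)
The plan is to leverage two structural features of \eqref{Reaction-diffusion-shear}. First, the \emph{difference} $p:=n_1-n_2$ solves the reaction-free passive scalar equation $\pa_t p+u(y)\pa_x p=\nu\de p$, so its $x$-oscillation $p_{\neq}$ obeys the enhanced dissipation bound $\|p_{\neq}(t)\|_{L^2}\le C\|p_{0;\neq}\|_{L^2}\,e^{-\delta d(\nu)t}$ coming straight from the $d(\nu)$-relaxation enhancing hypothesis, \emph{with no forcing term to control}; here $\|p_{0;\neq}\|_{L^2}\le\|n_{1;0}\|_{L^2}+\|n_{2;0}\|_{L^2}=:S$. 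Second, from the identity $4n_1n_2=(n_1+n_2)^2-(n_1-n_2)^2$, averaging over $\Torus^2$, discarding the nonnegative contribution of $(n_1+n_2)_{\neq}$, and using $n_\al\ge0$ (maximum principle), the elementary bound $ab\ge\min\{a,b\}^2$ for $a,b\ge0$, and Jensen on the unit torus, one gets with $w(t,y):=\min\{\lan n_1\ran(y),\lan n_2\ran(y)\}$
\[
\overline{n_1n_2}(t)\ \ge\ \int_{\Torus}\lan n_1\ran\lan n_2\ran\,dy-\tfrac14\|p_{\neq}(t)\|_{L^2}^2\ \ge\ \Big(\int_{\Torus}w(t,y)\,dy\Big)^2-\tfrac14\|p_{\neq}(t)\|_{L^2}^2 .
\]
Since $\tfrac{d}{dt}\|n_1\|_{L^1}=-\ep\,\overline{n_1n_2}\le0$, the statement reduces to a good lower bound for $\ep\int_{\mathcal T_0}^{\mathcal T_0+\mathcal T_1}(\int_\Torus w)^2\,dt$.

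To handle the overlap $\mu(t):=\int_\Torus w(t,y)\,dy$ (so $\mu(0)=\|\min_\al\lan n_{\al;0}\ran\|_{L^1_y}=:\mu$), I would observe that $\lan n_\al\ran$ solves $\pa_t\lan n_\al\ran=\nu\pa_y^2\lan n_\al\ran-\ep\lan n_1n_2\ran$ with the \emph{same} source for $\al=1,2$. Writing $w=\lan n_1\ran-v_+$ with $v:=\lan n_1\ran-\lan n_2\ran=\lan p\ran$, which solves the one-dimensional heat equation in $y$, and using that $\|v_+(t)\|_{L^1_y}$ is nonincreasing along that flow (Kato's inequality, or a smooth convex approximation of $(\cdot)_+$), one obtains $\tfrac{d}{dt}\mu(t)\ge-\ep\,\overline{n_1n_2}(t)$, hence $\mu(t)\ge\mu-\big(\|n_{1;0}\|_{L^1}-\|n_1(t)\|_{L^1}\big)$ for all $t\ge0$. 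Then argue by contradiction: if \eqref{comsumption_est_thm_3} fails then, since $\|n_{1;0}\|_{L^1}-\|n_1(t)\|_{L^1}$ is nondecreasing, it stays below $\tfrac1{12}\mu$ throughout $[0,\mathcal T_0+\mathcal T_1]$, so $\mu(t)\ge\tfrac{11}{12}\mu$ there.

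Substituting this into the displayed bound and using $\overline{n_1n_2}\ge0$ on $[0,\mathcal T_0]$,
\[
\|n_{1;0}\|_{L^1}-\|n_1(\mathcal T_0+\mathcal T_1)\|_{L^1}=\ep\!\int_0^{\mathcal T_0+\mathcal T_1}\!\!\overline{n_1n_2}\,dt\ \ge\ \ep\,\mathcal T_1\Big(\tfrac{11}{12}\mu\Big)^2-\tfrac{\ep}{4}\!\int_{\mathcal T_0}^{\infty}\!\|p_{\neq}(t)\|_{L^2}^2\,dt ,
\]
and the error term is at most $\tfrac{\ep C^2S^2}{8\delta d(\nu)}e^{-2\delta d(\nu)\mathcal T_0}$. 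Inserting the value of $\mathcal T_0$ gives $e^{-\delta d(\nu)\mathcal T_0}=C^{-1}\big(\tfrac{\ep S}{\delta d(\nu)}+1\big)^{-1}\tfrac{\mu}{S}$, so, using $\tfrac{a}{(1+a)^2}\le\tfrac14$ with $a=\ep S/(\delta d(\nu))$ and $\mu\le\|n_{1;0}\|_{L^1}\le S$, the error term is $\le\tfrac1{32}\mu$ once the relaxation-enhancing constant is absorbed into the constant defining $\mathcal T_0$. Since $\mathcal T_1=\ep^{-1}C\max\{1,\mu^{-1}\}$ forces $\ep\,\mathcal T_1(\tfrac{11}{12}\mu)^2\ge\tfrac{121C}{144}\mu$, taking $C$ a suitable absolute constant makes the right-hand side $\ge\tfrac1{12}\mu$, contradicting the assumed failure of \eqref{comsumption_est_thm_3}. (The hypothesis $\|n_{1;0}\|_{L^1}\le\|n_{2;0}\|_{L^1}$ is used only to state the conclusion for $n_1$; both densities lose exactly $\ep\int\overline{n_1n_2}$ of mass.) The single real difficulty — controlling the $x$-oscillations of $n_1,n_2$ despite the quadratic coupling, which otherwise requires estimating the forcing $(n_1n_2)_{\neq}$ with bad $\nu$-dependence — is removed by the substitution $p=n_1-n_2$, which turns that piece into a bare passive scalar equation; the rest is bookkeeping with the maximum principle, Kato's inequality, and the given semigroup bound.
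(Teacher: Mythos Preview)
Your argument is correct and is genuinely more direct than the paper's proof. Both approaches rest on the same structural observation that $p=n_1-n_2$ (the paper calls it $q$) solves the bare passive scalar equation, so that $\|p_{\neq}(t)\|_{L^2}$ decays at the enhanced rate with no forcing to control. From there, however, the two proofs diverge substantially.

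The paper introduces an auxiliary one-dimensional reaction-diffusion system $(\wt n_1,\wt n_2)$ approximating $(\lan n_1\ran,\lan n_2\ran)$, proves a distance lemma (Lemma~\ref{lem:distance-nal0-intermediate}) bounding $\|\lan n_\al\ran-\wt n_\al\|_{L^1_y}$ by the remainder interaction, and a second lemma (Lemma~\ref{lem:wt n_decay}) giving an \emph{exact} evolution law for $\|\min\{\wt n_1,\wt n_2\}\|_{L^1_y}$ that involves a detailed analysis of the zero set of $\wt n_1-\wt n_2$ via analyticity of heat solutions. It then runs a two-case dichotomy on whether the overlap of the auxiliary densities drops by half.

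You bypass all of this. Writing $\mu(t)=\|n_1(t)\|_{L^1}-\|(\lan n_1\ran-\lan n_2\ran)_+\|_{L^1_y}$ and invoking only Kato's inequality for the one-dimensional heat equation gives $\tfrac{d}{dt}\mu\ge-\ep\,\overline{n_1n_2}$, which is exactly the one-sided bound needed; the zero-crossing term in the paper's Lemma~\ref{lem:wt n_decay} is precisely the nonnegative contribution that Kato's inequality discards. Combined with the polarization bound $\overline{n_1n_2}\ge \mu(t)^2-\tfrac14\|p_{\neq}\|_{L^2}^2$, this closes the argument in a few lines with no auxiliary system, no case split, and no appeal to analyticity. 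The trade-off is that the paper's route tracks the $x$-averaged dynamics more finely (it produces an equality for the overlap evolution), which might be useful if one wanted sharper constants or to iterate the estimate; your route sacrifices that in exchange for a much shorter proof.
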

\begin{rmk}
Modulo logarithmic factors, the time \myr{$\mathcal T_0$} is of order $O(d(\nu)^{-1})$, which is the enhanced dissipation time scale. If the `overlapping mass' $\norm{\min_{\al\in\{1,2\}}\lan n_\al\ran (0 )}_{L_y^1}$ is not too small, the time \myr{$\mathcal T_1$} is of order $O(\ep^{-1})$. As a result, we observe that the total reaction time is determined by the larger one of the reaction time scale and the enhanced dissipation time scale.
\end{rmk}
\begin{rmk}The main difficulty in extending the above result to systems with different diffusion coefficients is that one of the key lemmas, i.e., Lemma \ref{lem:wt n_decay}, does not hold in general. As a result, keeping track of the time evolution of the $\min_\al \lan n_\al\ran$ becomes challenging.
\end{rmk}

\myr{\begin{rmk}
We believe the theorem can be extended to include chemical reactions on the plane subject to the two-dimensional vortices, as in \cite{CrimaldiCadwellWeiss08}. Here, one can apply the relaxation enhancing estimates for vortices in \cite{CotiZelatiDolce20}. We will leave this problem to future work. 
\end{rmk}}

The paper is organized as follows: in Section \ref{Sect:thm_1}, we prove Theorem \ref{thm_1}; in Section \ref{Sect:thm_2}, we prove Theorem \ref{thm:Systems}; in Section \ref{Sec:thm_3}, we prove Theorem \ref{thm_3}; in appendix, we prove the enhanced dissipation of the alternating flows. 

\section{Proof of Theorem \ref{thm_1}}\label{Sect:thm_1}


First of all, we apply the Nash inequality to prove the following lemma which provides $L^1$-estimates for the passive scalar solutions.
\begin{lem}[$L^1$-decay of the passive scalar solution]\label{Lem:L1}
Consider solutions $\eta_{\sim}$ to the passive scalar equation \eqref{PS_introduction} subject to zero average constraint, i.e., $\int_{\Torus^2} \eta_{\sim}(x,y)dV=0$. Assume that the flow $\mathbf{u}$ is $d(\nu)$-relaxation enhancing with decay rate $\delta d(\nu)$. Then if $\nu$ is small enough, there exist constants $c\in(0,1),\,C$ such that the following estimate holds
 \begin{align}\label{L1_ED}
|| \eta_{\sim}(s+t)||_1\leq C ||\eta_{\sim}(s)||_1 e^{-c\delta d(\nu)|\log\nu|^{-1}t},\quad \forall s,t\in[0,\infty).
\end{align}
\end{lem}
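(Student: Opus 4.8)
The plan is to transfer the $L^2$ enhanced dissipation hypothesis into an $L^1$ estimate by interpolating between $L^2$-decay and the trivial $L^\infty$-stability of the advection-diffusion flow, combined with the smoothing (hypercontractivity) of the equation in a short initial layer. Concretely, write $\eta_\sim(s+t)$ and first note that on a time interval of length comparable to a small power of $\nu$ the solution becomes instantaneously smooth: since $\mathbf{u}$ is divergence-free, the equation \eqref{PS_introduction} satisfies the Nash inequality argument on $\Torus^2$, so for any $\tau>0$ one has $\|\eta_\sim(s+\tau)\|_2 \le C \tau^{-1/2} \nu^{-1/2}\|\eta_\sim(s)\|_1$ (the standard $L^1\to L^2$ ultracontractivity bound with rate governed by $\nu$), and likewise $\|\eta_\sim(s+\tau)\|_\infty \le C(\nu\tau)^{-1}\|\eta_\sim(s)\|_1$. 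Dually, $\|\eta_\sim(s+t)\|_1 \le \|\eta_\sim(s+t)\|_2$ on the torus (Cauchy–Schwarz, finite measure). So the chain is: propagate from $L^1$ to $L^2$ in a short time $\tau$ (losing a factor $\sim (\nu\tau)^{-1/2}$), then apply the $d(\nu)$-enhanced dissipation $L^2\to L^2$ decay for the remaining time, then drop back from $L^2$ to $L^1$ for free.

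The key step is choosing $\tau$ so that the polynomial loss $(\nu\tau)^{-1/2}$ is absorbed into the exponential gain over the long time. The natural choice is $\tau \sim d(\nu)^{-1}$, or more carefully $\tau$ a fixed multiple of $d(\nu)^{-1}$: then the $L^2$ decay over a window of length $O(d(\nu)^{-1})$ beats any fixed power of the loss once we are willing to lose a logarithm. Precisely, running the three-step estimate over a single block of length $\Delta = K d(\nu)^{-1}|\log\nu|$ for a suitable constant $K$: the smoothing loss contributes $\log\big((\nu\tau)^{-1/2}\big) \lesssim |\log \nu|$ (using $\nu/d(\nu)\to 0$, so $\tau = d(\nu)^{-1} \le \nu^{-1}$ and the loss is at worst a power of $\nu^{-1}$, i.e. $O(|\log\nu|)$ in the exponent), while the $L^2$ gain over the block is $\delta d(\nu)\cdot(\Delta - \tau) \gtrsim \delta d(\nu) \Delta = \delta K |\log\nu|$. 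Taking $K$ large enough that $\delta K |\log\nu|$ exceeds the loss $O(|\log\nu|)$ plus $\log C$ yields a genuine contraction $\|\eta_\sim(s+\Delta)\|_1 \le e^{-1}\|\eta_\sim(s)\|_1$ (say) over each block. Iterating this over $\lfloor t/\Delta\rfloor$ blocks and absorbing the last partial block into the constant $C$ gives \eqref{L1_ED} with $c$ a small absolute constant times $\delta/K$, i.e. decay rate $c\,\delta\, d(\nu)|\log\nu|^{-1}$ as claimed.

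I would organize the write-up as: (i) state and prove the Nash-type smoothing bound $\|\eta_\sim(\tau)\|_2 \le C\nu^{-1/2}\tau^{-1/2}\|\eta_{\sim,0}\|_1$ for zero-average data (the zero average is only needed to apply the $L^2$ decay afterward; incompressibility gives $\frac{d}{dt}\|\eta_\sim\|_2^2 = -2\nu\|\nabla\eta_\sim\|_2^2$ and $\|\eta_\sim\|_1$ is nonincreasing, so Nash $\|g\|_2^{2+2} \le C\|\nabla g\|_2^2\|g\|_1^2$ on $\Torus^2$ closes the ODE); (ii) record $\|\cdot\|_1\le\|\cdot\|_2$ on $\Torus^2$ and the $L^1$-contraction of the flow; (iii) combine over one block of length $\Delta \sim d(\nu)^{-1}|\log\nu|$, track constants, choose $K$; (iv) iterate. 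The main obstacle is the bookkeeping in step (iii): making sure the constant $C$ from the $L^2$ decay hypothesis, the Nash constant, and the choice of $\tau$ relative to $\Delta$ interact so that the net exponent over a block is strictly negative — this is exactly where the advertised loss of a full $|\log\nu|$ factor is spent, and one must be careful that $\nu_0$ may have to be shrunk (depending only on $\mathbf{u}$, via $\delta$ and the relation $\nu/d(\nu)\to 0$) for the contraction to hold.
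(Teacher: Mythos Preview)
Your proposal is correct and follows essentially the same route as the paper's proof: Nash smoothing $L^1\to L^2$ (via the ODE $\frac{d}{dt}\|\eta_\sim\|_2^2 \le -\nu\|\eta_\sim\|_2^4/(C_N\|\eta_\sim\|_1^2)$), then the $L^2$ enhanced-dissipation hypothesis, then the trivial embedding $\|\cdot\|_1\le\|\cdot\|_2$ on $\Torus^2$, followed by iteration over blocks of length $\sim d(\nu)^{-1}|\log\nu|$. The only cosmetic difference is your choice of smoothing window $\tau\sim d(\nu)^{-1}$ versus the paper's $\tau=\delta^{-1}d(\nu)^{-1}|\log\nu|$; both work, and the paper's explicit bookkeeping ($\nu^3$ gain from $3\delta^{-1}d(\nu)^{-1}|\log\nu|$ of $L^2$-decay against a loss of $(\nu\tau)^{-1/2}$) is just a concrete instance of your step~(iii).
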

\begin{proof}
We begin with the derivation of  $L^1$-$L^2$-estimate of the passive scalar semigroup. Consider the time interval $[s, s+4\delta^{-1}d(\nu)^{-1}|\log \nu|]$. 
We estimate the time evolution of the $L^2$-norm using Nash inequality as follows:
\begin{align}
\frac{d}{dt}\frac{1}{2}||\eta_{\sim}(s+t)||_2^2\leq& -\nu ||\na \eta_{\sim}(s+t)||_2^2
\leq-\frac{\nu||\eta_{\sim}(s+t)||_2^4}{C_N||\eta_{\sim}(s+t)||_1^2}\leq-\frac{\nu||\eta_{\sim}(s+t)||_2^4}{C_N||\eta_{\sim}(s)||_1^2}.
\end{align}
Here the $L^1$-norm of $\eta_{\sim}$ is non-increasing because we can consider the solutions to \eqref{PS_introduction} \myr{evolving from} the positive and negative part of the initial data, i.e., $ \eta_{\sim}^\pm(s,x,y)=\max\{\pm \eta_{\sim}(s,x,y),0\}.$ Since both of them are positive and have conserved $L^1$-norms and $\eta_{\sim}$ is the sum of these two solutions, we have that the $L^1$-norm of $\eta_{\sim}(s+t)$ does not exceed the \myr{$L^1$-norm of $ \eta_{\sim}(s)$}. Next we directly solve the ordinary differential inequality 
subject to arbitrary positive initial data 
and obtain that there exists a universal constant $C$ such that the following estimate holds
\begin{align}||\eta_{\sim}(s+t)||_2\leq\frac{C}{(\nu t)^{1/2}}||\eta_{\sim}(s)||_1.\label{Nash}
\end{align}
Now we decompose the interval $[s, s+4\delta^{-1}d(\nu)^{-1}|\log \nu|]$ into two sub-intervals and apply the following estimate:
\begin{align*}
||\eta_{\sim}&(s+4\delta^{-1}d(\nu)^{-1}|\log\nu|)||_{1}\\
\leq & ||\eta_{\sim}(s+4\delta^{-1}d(\nu)^{-1}|\log\nu||)||_2
\leq C||\eta_{\sim}(s+  \delta ^{-1}d(\nu)^{-1}|\log \nu|)||_2 {e^{-\delta  d(\nu)3\delta ^{-1}d(\nu)^{-1}|\log \nu|}}\\
\leq &Ce^{-|\log\nu^3|}||\eta_\sim(s+\delta^{-1}d(\nu)^{-1}|\log\nu|)||_2\\
\leq& \frac{C\nu^{3}}{(\nu\delta^{-1}d(\nu)^{-1}|\log \nu|)^{1/2}}||\eta_{\sim}(s)||_{1}\leq \frac{1}{2}||\eta_{\sim}(s)||_1.
\end{align*}
In the last line, we choose $\nu$ small enough compared to universal constants so that the coefficient is small. We further note that the $L^1$-norm of $\eta_{\sim}$ is non-increasing along the dynamics. To conclude, we iterate the argument on consecutive intervals to derive the estimate \eqref{L1_ED}.
\end{proof}

\ifx
\begin{align}
||S&_{s,s+c^{-1}\delta^{-1}d(\nu)^{-1}|\log \nu|}\eta  _{\sim}(s)||_1\\
=&C||S_{s+\frac{1}{2}c^{-1}\delta ^{-1}d(\nu)^{-1}|\log \nu|,s+c^{-1}\delta ^{-1}d(\nu)^{-1}|\log \nu|}S_{s,s+\frac{1}{2}c^{-1}\delta ^{-1}d(\nu)^{-1}|\log \nu|}\eta_{\sim}(s)||_2\\
\leq&C||\eta_{\sim}(s+ \frac{1}{2}c^{-1}\delta ^{-1}d(\nu)^{-1}|\log \nu|)||_2 {e^{-\frac{1}{2}\delta  d(\nu)c^{-1}\delta ^{-1}d(\nu)^{-1}|\log \nu|}}\\
\leq&\frac{C }{(\nu \frac{1}{2}c^{-1}\delta ^{-1}d(\nu)^{-1}|\log \nu|)^{1/2}}{e^{-\frac{1}{2}\delta  d(\nu)c^{-1}\delta ^{-1}d(\nu)^{-1}|\log \nu|}}||\eta_{\sim}(s)||_1
\leq \frac{1}{64}||\eta_{\sim}(s)||_1.&\label{L2_Linfty_semigroup}
\end{align} 
\fi
\ifx
Next, we will assume that the $L^\infty$ norm of the remainder $q_{\sim}$ is bounded by a small constant. The reasoning is as follows. At the time $t_{\mathrm{in}}$, the $L^2$ norm of the solution $||q_{\sim}(t_{\mathrm{in}})||_2\leq C_q=\mathcal{O}(1)$. By applying the Nash inequality and duality argument, we obtain that there exists a universal constant $C$ such that the solution to the passive scalar equation has the following decay in dimension two:
\begin{align}\label{Nash}
||q_{\sim}(t+s)||_\infty&\leq C (\nu t)^{-1/2}||q_{\sim}(s)||_2, \quad \forall s\geq 0, t\geq 0.
\end{align} Applying the semigroup estimate \eqref{Semigroup_est_stochastic_flow} yields that
\begin{align}
||q_{\sim}(t_{\mathrm{in}}+s+\log\nu^{-1})||_\infty&\leq C (\nu \log\nu^{-1})^{-1/2}e^{-\delta_{ED} \log^{-1}\nu^{-1} s}||q_{\sim}(t_{\mathrm{in}})||_2.
\end{align}
By setting $s=2\delta_{ED}^{-1}\log^2\nu^{-1}$ and choose $\nu$ small but independent of the solutions and $\ep$, we have that
\begin{align}
||q_{\sim}(t)||_\infty\leq \frac{1}{32}, \quad \forall t\geq t_\mathrm{in}+\log\nu^{-1}+2\delta_{ED}^{-1}\log^2\nu^{-1}.
\end{align}
With this estimate, one can prove the enhanced decay in \eqref{slow_regime_p_neq_decay_2D_flow} starting at time $t_\mathrm{in}+\log\nu^{-1}+2\delta_{ED}^{-1}\log^2\nu^{-1}$. On the time interval $[t_{\mathrm{in}},t_{\mathrm{in}}+\log\nu^{-1}+2\delta_{ED}^{-1}\log^2\nu^{-1}]$, we use the simple estimate \eqref{simple_est_of_p_neq}. Since this interval have length of order $\delta_{ED}^{-1}\log^2\nu^{-1}$, the exponential decaying factor in \eqref{slow_regime_p_neq_decay_2D_flow} is bounded below by a constant. Hence by applying a slightly larger constant $C_{ED}$, we prove the estimate \eqref{slow_regime_p_neq_decay_2D_flow} on this time interval.
\fi

\begin{proof}[Proof of Theorem \ref{thm_1}]
We organize the proof in three steps.

\noindent
\textbf{Step \# 1: Preparations. } In this step, we translate the continuous-in-time  decay estimate \eqref{n_1_L_1_decay} into a discrete-in-time one and properly decompose the time horizon.

We recall that the average density $\overline{n_1}$ is bounded from below by $1/B$ on the maximal time interval $[0,T]$.  Fix an arbitrary instance $t_0$ in  $[0,T]$ and define the net reaction time as $T_\star:=C(B)(\sum_{\al\in\{1,2\}}\delta^{-1}d(\nu_\al)^{-1}|\log \nu_\al|+\ep^{-1})$. The constant $C(B)$, which will be chosen later in the proof, depends on the mass threshold $1/B$ and the constants $C, c$ in Lemma \ref{Lem:L1}. The estimate \eqref{n_1_L_1_decay} is ensured if we can show that the total mass $||n_1||_{L^1}$ decays by a fixed proportion by the time $t_0+T_\star$, i.e.,
\begin{align}\label{goal}
||n_1(t_0+T_\star)||_{L^1}\leq \frac{3}{4}||n_1(t_0)||_{L^1},\quad [t_0,t_0+T_\star]\subset[0,T].
\end{align}
To see this implication, we pick a time $t\in [0,T]$ and determine the largest integer $m\in \mathbb{N}$ such that $m T_\star\leq t$. The choice of $m$ guarantees the relation $t\leq (m+1)T_\star$. Then invoking the estimate \eqref{goal} yields that
\begin{align}
||n_1(t)||_{L^1}=||n_1(t-m T_\star+mT_\star)||_{L^1}\leq ||n_1(t-mT_\star)||_{L^1}\left(\frac{3}{4}\right)^m\leq ||n_1(t-mT_\star)||_{L^1} e^{-(\frac{t}{T_\star}-1)\log \frac{4}{3}}.
\end{align}
Direct calculation yields that the $L^1$-norm $||n_1(t)||_1$ is decreasing in time. Hence,
\begin{align}
||n_1(t)||_{L^1}\leq\frac{4}{3} ||n_{1;0}||_{L^1}e^{-\frac{t}{T_\star}\log \frac{4}{3}}=\frac{4}{3} ||n_{1;0}||_{L^1}\exp\bigg\{-\frac{C^{-1}(B)\log\frac{4}{3}}{\sum_{\al=1}^{2}\delta^{-1}d(\nu_\al)^{-1}|\log \nu_\al|+\ep^{-1}} t\bigg\}.
\end{align}
Modulo small adjustment to constants, this is the result \eqref{n_1_L_1_decay}.

Next we introduce the partition of time interval $[t_0,t_0+T_\star]$. One of the obstacles to proving \eqref{goal} is that two distinct phenomena occur on the interval $[t_0,t_0+T_\star]$, with enhanced dissipation and chemical reaction involved. 
Hence our strategy is to decompose the time interval into two parts and focus on deriving the enhanced dissipation estimates on the first part and the reaction estimates on the second. To be precise, we define
\begin{align}
[t_0,t_0+T_\star]=&[t_0,t_0+T_1)\cup [t_0+T_1,t_0+T_1+T_2], \\
T_1:=&C_1\sum_{\al=1}^2 \delta^{-1}d(\nu_\al)^{-1}|\log \nu_\al|, \quad T_2:=16B\ep^{-1}\log 2.\label{T_1_T_2_thm_1}
\end{align}
Here the universal constant $C_1$ depends only on the constants $c,\,C$  appeared in \eqref{L1_ED} and will be chosen in \eqref{choice_of_C_1}. Since $|\Torus|=1$, the estimate \eqref{goal} is equivalent to 
\begin{align}
\overline{n_1}(t_0+T_1+T_2)\leq \frac{3}{4}\overline{n_1}(t_0).
\end{align}
This concludes step \# 1.

\noindent
\textbf{Step \# 2: Nonlinear enhanced dissipation estimates.}
To derive \eqref{goal}, the first main estimate we require is the nonlinear enhanced dissipation estimate at time instance $t_0+T_1$. The challenge is that the reaction coefficient $\ep$ can be much larger than $d(\nu_\al)$, and the nonlinear term cannot be treated perturbatively in general. Our idea is that on the time interval $[t_0,t_0+T_1)$, one considers the super solutions
\begin{align}
\pa_t \wt n_\al+\bu\cdot\na \wt n_\al=\nu_\al\de \wt n_\al ,\quad \wt n_\al(t_0,\cdot)=n_\al(t_0,\cdot),\quad \al\in\{1,2\},
\end{align}
and uses the total reacted mass
\begin{align}
Q(t):=\ep\int_0^t  \int_{\Torus^2} n_1n_2dVds\label{Q}
\end{align}
to control the deviation between the  super solutions and the real solutions. The same quantity $Q$ is considered in the paper \cite{CrimaldiCadwellWeiss08}.
Direct calculations yield the following relation
\begin{align}\label{Q_difference}&&\quad
Q(t_0+t)-Q(t_0)=&\ep\int_{t_0}^{t_0+t} \int_{\Torus^2} n_1n_2dVds
=||n_\al (t_0)||_1-||n_\al(t_0+t)||_1=||\wt n_\al-n_\al||_1(t_0+t).
\end{align}
{Explicit justification of \eqref{Q_difference} is as follows. First of all, by integrating the equation \eqref{Reaction-diffusion} in space and time, one obtains the relation $||n_\al(t_0)||_{L^1}-||n_\al(t_0+t)||_{L^1}=Q(t_0+t)-Q(t_0)$. Next we observe that since $\wt n_\al$ are super-solutions, the differences $\wt n_\al-n_\al$ are greater than zero. Hence integrating the equations of $\wt n_\al-n_\al$ yields the last equality in \eqref{Q_difference}.}

With the total reacted mass $Q$ introduced, we are ready to derive the nonlinear enhanced  dissipation estimate.
By the linear enhanced dissipation estimate \eqref{L1_ED} and the fact that $\wt n_{\al;\sim}$ solves the passive scalar equation subject to zero average constraint \eqref{average_zero_constraint_2D_flow}, we can choose the $C_1$ in \eqref{T_1_T_2_thm_1} large enough such that \myr{for all $s\geq 0$}
\begin{align}
||\wt n_{\al;\sim}(t_0+T_1+s)||_1\leq \frac{1}{16}||\wt n_{\al;\sim}(t_0)||_1\leq\frac{1}{8}||n_\al(t_0)||_1=\frac{1}{8}\overline{n_\al}(t_0),\quad\al\in\{1,2\}. \label{choice_of_C_1}
\end{align}
Hence, the $L^1$-norm of the remainder $n_{\al;\sim}$ is bounded, i.e.,
\begin{align}
||n_{\al;\sim}(t_0+T_1+s)||_1\leq& ||\wt n_{\al;\sim}(t_0+T_1+s)||_1+||\wt n_{\al;\sim}(t_0+T_1+s)-n_{\al;\sim}(t_0+T_1+s)||_1\\
\leq& ||\wt n_{\al;\sim}(t_0+T_1+s)||_1+||\wt n_{\al}(t_0+T_1+s)-n_{\al}(t_0+T_1+s)||_1\\
&+|\overline{\wt n_{\al}(t_0+T_1+s)}-\overline{n_\al(t_0+T_1+s)}|\\
\leq&\frac{1}{8} \overline{n_\al}(t_0)+2Q(t_0+T_1+s)-2Q(t_0),\quad\forall s\geq 0, \, \al\in\{1,2\}.\label{n_al_sim_est}
\end{align}
This concludes the proof of the nonlinear enhanced dissipation estimates and step \# 2.
\ifx we need  \begin{align}
||n_{1;\sim}(t_0+T_1+s)||_1\leq& ||\wt n_{1;\sim}(t_0+T_1+s)||_1+||\wt n_{1;\sim}(t_0+T_1+s)-n_{1;\sim}(t_0+T_1+s)||_1\\
\leq& ||\wt n_{1;\sim}(t_0+T_1+s)||_1+||\wt n_{1}(t_0+T_1+s)-n_{1}(t_0+T_1+s)||_1\\
&+||\overline{\wt n_{1}(t_0+T_1+s)}-\overline{n_1(t_0+T_1+s)}||_1\\
\leq&\frac{1}{16} |\Torus|^2\overline{n_1}(t_0)+2Q(t_0+T_1+s)-2Q(t_0).\label{n_1_sim_est}
\end{align}
\begin{align}
||n_{2;\sim}(t_0+T_1+s)||_1\leq& ||\wt n_{2;\sim}(t_0+T_1+s)||_1+||\wt n_{2;\sim}(t_0+T_1+s)-n_{2;\sim}(t_0+T_1+s)||_1\\
\leq&\frac{1}{16} |\Torus|^2\overline{n_2}(t_0)+2Q(t_0+T_1+s)-2Q(t_0).\label{n_2_sim_est}
\end{align}
\fi

\noindent
\textbf{Step \# 3: Proof of estimate \eqref{goal}.}
We focus on the second time component $[t_0+T_1,t_0+T_1+T_2]$ and distinguish between two possible cases.

\noindent \textbf{Case a)}
If $Q(t_0+T_1+s)-Q(t_0)=||n_1(t_0)||_1-||n_1(t_0+T_1+s)||_1\geq \frac{1}{4}\overline{n_1}(t_0)$ for some $0\leq s<T_2$, then positivity and the fact that $\overline{n_1}(t)$ is decreasing in time yields that
\begin{align}
\overline{n_1}(t_0+T_1+T_2)\leq \frac{3}{4}\overline{n_1}(t_0).
\end{align}
Thus we have the estimate \eqref{goal}.

\noindent \textbf{Case b)}
The negation to the condition in case a) is that
\begin{align}\label{Assumption_Q}
Q	(t_0+T_1+s)-Q(t_0)< \frac{1}{4}\overline{n_1}(t_0),\quad\forall s\in[0, T_2) .
\end{align}
To establish \eqref{goal}, we make three preparations and estimate the time evolution of $\overline{n_1}$. Combining \eqref{Assumption_Q} and the relation \eqref{Q_difference} yields   that
\begin{align}\label{Assumption_Q_2}
& &\ \ \,\frac{1}{4}\overline{n_1}(t_0)>||n_\al(t_0)||_1-||n_\al(t_0+T_1+s)||_1
=\overline{n_\al}(t_0)-\overline{n_\al}(t_0+T_1+s),\quad \forall s\in[0,T_2), \al=1,2.
\end{align}
Hence,
\begin{align}\label{Assumption_overline_n_1}
\overline{n_2}(t_0+T_1+s)\geq\overline{n_1}(t_0+T_1+s)> \frac{3}{4}\overline{n_1}(t_0),\, \forall s\in[0,T_2).
\end{align}
Next, we apply positivity of {$n_\al(x,y)=\overline{n_\al}-n_{\al;\sim}^-(x,y)\geq 0,\quad \forall (x,y)\in\{n_{\al;\sim}^->0\}$} to derive that
\begin{align}||n_{\al;\sim}^-||_\infty\leq \overline{n_{\al}},\quad\al\in\{1,2\}.\label{negative_part_bound}
\end{align} For the positive part of the remainder, we apply relation \eqref{n_al_sim_est} and assumption \eqref{Assumption_Q} to obtain
\begin{align}
||n_{\al;\sim}^+(t_0+T_1+s)||_1 =&\frac{1}{2}||n_{\al;\sim}(t_0+T_1+s)||_1\leq\frac{1}{16}\overline{n_\al}(t_0)+Q(t_0+T_1+s)-Q(t_0)\\
\leq &\frac{1}{16}\overline{n_\al}(t_0)+\frac{1}{4}\overline{n_1}(t_0),\quad \al=1,2,\, \forall s\in[0,T_2).\label{positive_part_L1}
\end{align}
Now we consider the time evolution of $\overline{n_1}$\ \
\begin{align}
&\frac{d}{dt}\overline{n_1}(t)=-\ep \overline{n_1}(t)\overline{n_2}(t) -\ep \overline{n_{1;\sim} n_{2;\sim}}(t)
\leq -\ep \overline{n_1}(t)\overline{n_2}(t)+\ep \overline{ n_{1;\sim}^+ n_{2;\sim}^-}(t)+\ep \overline{ n_{1;\sim}^- n_{2;\sim}^+}(t).&
\end{align}
For $t= t_0+T_1+s$, we first apply \eqref{negative_part_bound} to obtain 
\begin{align}
\frac{d}{ds}\overline{n_1}( t_0+T_1+s)\leq&-\ep \overline{n_1}( t_0+T_1+s)\,\overline{n_2}( t_0+T_1+s)+\ep||n_{1;\sim}^+( t_0+T_1+s)||_1\overline{n_2}( t_0+T_1+s)\\
&+\ep||n_{2;\sim}^+( t_0+T_1+s)||_1\overline{n_1}( t_0+T_1+s).
\end{align}
Then by \eqref{Assumption_overline_n_1}, \eqref{positive_part_L1},
\begin{align}
\frac{d}{ds}\overline{n_1}( t_0+T_1+s)\leq&-\ep \overline{n_1}( t_0+T_1+s) \overline{n_2}( t_0+T_1+s)+\ep \frac{3}{8}\overline{n_{1}}(t_0)\overline{n_2}( t_0+T_1+s)\\
&+\ep\left(\frac{1}{4}\overline{n_{1}}(t_0)+\frac{1}{16}\overline{n_2}(t_0)\right)\overline{n_1}( t_0+T_1+s)\\
\leq&-\ep \overline{n_2}( t_0+T_1+s) \left(\frac{1}{2}\overline{n_1}( t_0+T_1+s)- \frac{3}{8}\overline{n_1}(t_0)\right)  \\
&- \ep \overline{n_1}( t_0+T_1+s) \left(\frac{1}{2}\overline{n_2}( t_0+T_1+s)- \frac{1}{4}\overline{n_1}(t_0)-\frac{1}{16} \overline{n_2}(t_0)\right)=:\mathbb{P}_1+\mathbb{P}_2.
\end{align} The relations \eqref{Assumption_Q_2}, \eqref{Assumption_overline_n_1} yield that the first part $\mathbb{P}_1$ is negative and the second term is bounded above:
\begin{align}
\mathbb{P}_2\leq& - \ep \overline{n_1}( t_0+T_1+s) \left(\bigg(\frac{3}{4}\frac{7}{16}-\frac{1}{4}\bigg)\overline{n_1}(t_0)-\frac{1}{16}(\overline{n_2}(t_0)-\overline{n_2}(t_0+T_1+s))\right)\\
\leq& -\frac{1}{16}\ep \overline{n_1}(t_0)  \overline{n_1}( t_0+T_1+s) .
\end{align}  We apply the assumption $\overline{n_1}(t_0)\geq 1/B$ to obtain, \begin{align}
\frac{d}{ds}\overline{n_1}( t_0+T_1+s)
\leq&-\ep\frac{1}{16 B}\overline{n_1}( t_0+T_1+s),.
\end{align}
Now we see that after time $T_2={16}B\ep^{-1}\log 2$, the $\overline{n_1}$ decays to $\frac{3}{4}
$ of its starting value. Hence we prove the estimate \eqref{goal} in case b) and conclude the proof of Theorem \ref{thm_1}.
\end{proof}
\ifx Can we use cutoff to localize the data in the $y$ direction in the shear case? Note that
\begin{align}
\lan \wt n_1-n_1\ran(y)=\sum_{i=-\infty}^\infty\int_0^t\int_\Real\frac{1}{\sqrt{4\pi \nu_1(t-s)}}e^{-\frac{|z-y|^2}{4\nu_1 (t-s)}}\ep \lan n_1n_2\ran_i(z)dzds.
\end{align}
Now we note that the $L^\infty$ norm on the left is determined by the $L_t^{..}L_y^1$-norm of $\lan n_1n_2\ran$. Energy estimate of $L^1$-type?\fi

\textcolor{red}{}
\section{Proof of Theorem \ref{thm:Systems}}\label{Sect:thm_2}
Since the proof is similar to the one in Theorem \ref{thm_1}, we only highlight the main differences.

We consider the total mass $M_{all}$
\begin{align}
M_{all}(t):=\sum_{\al\in\mathcal{I}} ||n_\al(t)||_1.
\end{align}
The positivity of the reaction coefficients $\ep_{\al\beta}\geq 0$  yields that $M_{all}(t)$ is \myr{monotonically }decreasing.
Furthermore, we consider the characteristic reaction time \begin{align}
T_\star=\log(32CB_2) (c\delta)^{-1}\max_{\al\in \mathcal{I}} d(\nu_\al)^{-1}|\log \nu_\al|+2 B_1 \left(\min_{\al\in\mathcal{I}}\max_{\beta\in\mathcal{I}} \ep_{\al\beta}\right)^{-1} \log\left(1-\frac{1}{8B_2}\right)^{-1}=:T_1+T_2.
\end{align}
Here $C,\,c,\, \delta$ are the constants \myr{appearing} in Lemma \ref{Lem:L1}. Recall that $[0,T]$ is the maximal interval on which \eqref{Mass_lower_bound} and \eqref{Ratio_upper_bound} hold. By the argument in step \# 1 within the proof of Theorem \ref{thm_1}, the estimate \eqref{n_all_L_1_decay} is a consequence of the following statement:
\begin{align}\label{goal_system}
M_{all}(t_0+T_\star)\leq  \bigg(1-\frac{1}{8 B_2}\bigg) M_{all}(t_0),
\end{align} where $ [t_0,t_0+T_\star]$ is an arbitrary interval embedded in $[0,T]$.

To acquire the nonlinear enhanced dissipation estimate, we consider the total reacted mass $Q_{all}$,
\begin{align}
\quad Q_{all}(t):=\sum_{\al,\beta\in\mathcal{I}}\int_0^t\int _{\Torus^2}\ep_{\al\beta}n_{\al}n_\beta dV ds.
\end{align} Since $\ep_{\al\beta}\geq 0$,  $Q_{all}(t)$ is non-negative and increasing.
Direct time integration yields that for $\forall t_0,t\in[0,\infty)$
\begin{align}\label{Relation_system}
Q_{all}(t_0+t)-Q_{all}(t_0)
=M_{all}(t_0)-M_{all}(t_0+t)=&\sum_{\al\in \mathcal{I}}||\widetilde{n}_\al(t_0+t)-n_{\al}(t_0+t)||_{L^1},
\end{align}
where $\{\wt{n}_\al\}_{\al\in \mathcal{I}}$ are super solutions to $\{n_\al\}_{\al\in \mathcal{I}}$ defined by
\begin{align}
\pa_t \wt n _\al=\nu_\al \de \wt n_\al -\bu\cdot \na \wt n_\al,\quad \wt n_{\al}(t_0)=n_\al(t_0).
\end{align}
Note that the lower bound on $Q$,
\begin{align}
Q_{all}(t_0+T_\star)-Q_{all}(t_0)\geq \frac{1}{8 B_2}M_{all}(t_0), 
\end{align}
when combined with \eqref{Relation_system}, yields the final result \eqref{goal_system}. Hence we make the following assumption throughout the remaining part of the proof
\begin{align}
Q_{all}(t_0+t)-Q_{all}(t_0)< \frac{1}{8B_2}M_{all}(t_0),\quad \forall t\in[0,T_\star).\label{Assumption_system}
\end{align}

To obtain the nonlinear enhanced dissipation estimate, we invoke \eqref{L1_ED} to derive  the following 
\begin{align}
\sum_\al||\wt n_{\al;\sim}(t_0+T_1+s)||_{L^1}\leq \frac{1}{32B_2}\sum_\al ||n_{\al;\sim}(t_0)||_{L^1}\leq \frac{1}{16B_2}\sum_\al|| n_\al(t_0)||_{L^1} ,\quad \forall s\in[0,T_2].\label{supersolution_remainder}
\end{align}
Application of the relations  \eqref{Relation_system}, \eqref{Assumption_system} and \eqref{supersolution_remainder} then yields that
\begin{align}
||n_{\al;\sim}(t_0+T_1+s)||_{L^1}\leq& ||\wt n_{\al;\sim}(t_0+T_1+s)||_{L^1}+||n_{\al;\sim}(t_0+T_1+s)-\wt n_{\al;\sim}(t_0+T_1+s)||_{L^1}\\
\leq &\frac{3}{8B_2}M_{all}(t_0),\quad \forall s\in[0,T_2].\label{n_al_sim_t_0+T_1+s_L_1}
\end{align}
This is the enhanced dissipation we use in the sequel.

Next we invoke the relations \eqref{Relation_system}, \eqref{Assumption_system}, the assumption $\eqref{Ratio_upper_bound}_{t=t_0}$ and the enhanced dissipation estimate \eqref{n_al_sim_t_0+T_1+s_L_1} to obtain that
\begin{align}
\overline{n}_\al &(t_0+T_1+s)\geq\overline{n}_\al(t_0)-|\overline{n}_\al(t_0)-\overline{n}_\al(t_0+T_1+s)|\\
\geq& \frac{1}{B_2 }M_{all}(t_0)-\frac{M_{all}(t_0)}{8  B_2}
\geq \left(\frac{1}{B_2}-\frac{1}{8B_2}\right)\frac{8B_2}{3}||n_{\al;\sim}(t_0+T_1+s)||_1,\quad \forall s\in[0,T_2].
\end{align}
Hence,
\begin{align}
\overline{n}_\al(t_0+T_1+s)\geq 2 ||n_{\al;\sim}(t_0+T_1+s)||_{L^1},\quad \forall \al\in \mathcal{I}, \forall s\in[0,T_2].
\end{align}
Now we can use the above information to estimate the time evolution of $M_{all}(t_0+T_1+s),\, \forall s\in[0, T_2]$ 
\begin{align}
\frac{d}{ds}&M_{all}(t_0+T_1+s)
=-\sum_{\al\in \mathcal{I}}\sum_{\beta\in\mathcal{I}}\ep_{\al\beta}\int n_\al n_\beta dV\\
=&-\sum_{\al\in \mathcal{I}}\sum_{\beta\in \mathcal{I}}\ep_{\al\beta} (\overline{n_\al}\, \overline{n_\beta}-\overline{n_{\al;\sim} n_{\beta;\sim}})\\
\leq&-\sum_{\al\in \mathcal{I}}\sum_{\beta\in \mathcal{I}}\ep_{\al\beta} \bigg(\overline{n_\al}(t_0+T_1+s)\, \overline{n_\beta}(t_0+T_1+s)-||n_{\al;\sim}^+(t_0+T_1+s)||_1||n_{\beta;\sim}^-(t_0+T_1+s)||_\infty\\
&-||n_{\beta;\sim}^+(t_0+T_1+s)||_1||n_{\al;\sim}^-(t_0+T_1+s)||_\infty\bigg).
\end{align}
Since $||n_{\al;\sim}^-||_\infty\leq \overline{n_\al}$, we have
\begin{align}\frac{d}{dt}&M_{all}(t_0+T_1+s)\\
\leq &-\sum_{\al\in \mathcal{I}}\sum_{\beta\in \mathcal{I}}\ep_{\al\beta} \bigg(\overline{n_\al}(t_0+T_1+s)\, \overline{n_\beta}(t_0+T_1+s)\\
&-\overline{n_\al}(t_0+T_1+s)\frac{1}{2}||n_{\beta;\sim}(t_0+T_1+s)||_1\, -\overline{n_\beta}(t_0+T_1+s)\frac{1}{2}||n_{\al;\sim}(t_0+T_1+s)||_1\bigg)\\
\leq&-\sum_{\al\in \mathcal{I}}\sum_{\beta\in \mathcal{I}}\ep_{\al\beta} (\overline{n_\al}(t_0+T_1+s)\, \overline{n_\beta}(t_0+T_1+s)/ 2).
\end{align}
Recalling that the assumption \eqref{Mass_lower_bound} holds on the time horizon $[t_0,t_0+T_\star]\subset [0,T]$, we have
\begin{align}
&\frac{d}{ds}M_{all}(t_0+T_1+s)\leq -\frac{1}{2 B_1}\min_\al\max_\beta \ep_{\al\beta}\sum_\al \overline{n_\al}(t_0+T_1+s)
= -\frac{1}{2 B_1}\left( \min_\al\max_\beta \ep_{\al\beta} \right) M_{all}(t_0+T+s).&
\end{align}
Now in time $T_2=2B_1(\min_\al\max_\beta \ep_{\al\beta})^{-1}\log\left(1-\frac{1}{8B_2}\right)^{-1}$,  sufficient mass is consumed, i.e.,
\begin{align}
M_{all}(t_0+T_\star)=M_{all}(t_0+T_1+T_2)\leq\bigg(1-\frac{1}{8B_2}\bigg)M_{all}(t_0).
\end{align}
This concludes the proof of \eqref{goal_system}. Hence the estimate \eqref{n_all_L_1_decay} follows.

\section{Proof of Theorem \ref{thm_3}} \label{Sec:thm_3}

In this section we prove Theorem \ref{thm_3}. The goal is to keep track of the total mass $||n_1(t)||_1$. To this end, we consider dynamics of the $x$-averages $\lan n_\al\ran$ \myr{defined in } \eqref{x_average_and_remainder} and design a $1D$-system to approximate their behaviors. By taking the $x$-average of the equations \eqref{Reaction-diffusion-shear}, we obtain
\begin{align}
\pa_t \lan n_\al\ran=\nu\pa_{yy}\lan n_\al\ran-\ep \lan n_\al\ran \lan n_\beta\ran-\ep\left\lan n_{\al;\neq}n_{\beta;\neq}\right\ran, \quad\beta\neq \al,\, \forall\al\in\{1,2\}.\label{zero_mode_equation}
\end{align}
To analyze the evolution of the system \eqref{zero_mode_equation},
we consider an intermediate one-dimensional periodic in space dynamics
\begin{align}
\pa_t \widetilde{n}_{1}=\nu\pa_{yy}& \widetilde{n}_{1}-\ep \widetilde{n}_{1}\widetilde{n}_{2},\quad
\pa_t \widetilde{n}_{2}=\nu\pa_{yy} \widetilde{n}_{2}-\ep \widetilde{n}_{1}\widetilde{n}_{2},\label{1D_dynamics}\\
(\widetilde{n}_{1}(t_0),&\widetilde{n}_{2}(t_0))=(\lan n_{1}\ran(t_0),\lan n_{2}\ran(t_0)).
\end{align} 


Before proving Theorem \ref{thm_3}, we present two lemmas. The first one provides an estimate of the $L^1$-distance between $\lan n_{\al}\ran$ and $\widetilde{n}_{\al}$. The other describes the evolution of the $L^1$-norms of the solutions $\wt n_\al$.
\begin{lem}\label{lem:distance-nal0-intermediate}
Consider the solutions $\lan n_{\al}\ran,\ \al \in \{1,2\}$ to \eqref{zero_mode_equation} and the solutions $\widetilde{n}_{\al}, \ \al\in\{1,2\}$ to the 1-dimensional dynamics \eqref{1D_dynamics}.
The $L^1$ distance between the two solutions are bounded in terms of the initial data as follows:
\begin{align}\label{Linfty_distance_bound_slow_reaction}
||\lan n_{\al}\ran-\widetilde{n}_{\al}||_{L_{y}^1}(t_0+t )- ||\lan n_{\al}\ran-\widetilde{n}_{\al}||_{L_{y}^1}(t_0 )\leq \ep \int_{t_0}^{t_0+t}\int|\lan n_{1;\neq}n_{2;\neq}\ran| dyds, \quad\al=1,2.
\end{align}
\end{lem}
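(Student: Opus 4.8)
The plan is to compare the two systems directly by writing down the equation satisfied by the difference $w_\al := \lan n_\al\ran - \wt n_\al$ and then tracking its $L^1_y$-norm. Subtracting \eqref{1D_dynamics} from \eqref{zero_mode_equation} gives, for $\beta\neq\al$,
\begin{align}
\pa_t w_\al = \nu\pa_{yy} w_\al - \ep\big(\lan n_\al\ran\lan n_\beta\ran - \wt n_\al\wt n_\beta\big) - \ep\lan n_{1;\neq}n_{2;\neq}\ran.
\end{align}
The key algebraic observation is that the difference of the quadratic reaction terms can be split as
\begin{align}
\lan n_\al\ran\lan n_\beta\ran - \wt n_\al\wt n_\beta = w_\al\lan n_\beta\ran + \wt n_\al\, w_\beta,
\end{align}
so that the ``leading'' part of the reaction contribution to $\pa_t w_\al$ is linear in $w_\al, w_\beta$ with coefficients $\lan n_\beta\ran,\wt n_\al\geq 0$. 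These coefficients are nonnegative because all the densities involved are nonnegative (a parabolic maximum principle statement one should record: $n_\al\geq 0$ forces $\lan n_\al\ran\geq 0$, and $\wt n_\al\geq 0$ follows from \eqref{1D_dynamics} since the reaction terms are of absorbing type).

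First I would pass to the $L^1_y$ bound via the standard device of testing against $\mathrm{sgn}(w_\al)$ (equivalently, splitting into positive and negative parts $w_\al^{\pm}$ and using Kato's inequality). The diffusion term $\nu\pa_{yy}w_\al$ contributes a nonpositive amount to $\frac{d}{dt}\|w_\al\|_{L^1_y}$ after integration by parts. For the linear reaction piece $-\ep(w_\al\lan n_\beta\ran + \wt n_\al w_\beta)$: the term $-\ep\,\mathrm{sgn}(w_\al)\,w_\al\lan n_\beta\ran = -\ep|w_\al|\lan n_\beta\ran\leq 0$ has a favorable sign and can be dropped, while the cross term $-\ep\,\mathrm{sgn}(w_\al)\,\wt n_\al w_\beta$ is bounded by $\ep\,\wt n_\al|w_\beta|$ in absolute value. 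Summing the two equations ($\al=1,2$), this cross term contributes $\ep(\wt n_1 + \wt n_2)|w_\beta|$-type quantities — here one must be slightly careful, but in fact since the two reaction terms in \eqref{1D_dynamics} are \emph{identical} ($-\ep\wt n_1\wt n_2$ in both), one has $w_1 - w_2 = \lan n_1\ran - \lan n_2\ran - (\wt n_1 - \wt n_2)$, and $\pa_t(\wt n_1 - \wt n_2) = \nu\pa_{yy}(\wt n_1 - \wt n_2)$, which I would use to control the difference cleanly. The only genuinely nonnegative source term that survives in the estimate for $\frac{d}{dt}\sum_\al\|w_\al\|_{L^1_y}$ is the inhomogeneity $\ep\|\lan n_{1;\neq}n_{2;\neq}\ran\|_{L^1_y}$, and integrating in time from $t_0$ to $t_0+t$ yields \eqref{Linfty_distance_bound_slow_reaction}.

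The step I expect to be the main obstacle is the careful handling of the cross term $\ep\wt n_\al w_\beta$ in the $L^1$ estimate: naively it produces a Grönwall factor $e^{C\ep t}$, which would be useless. The resolution must exploit the special structure — that both reaction nonlinearities coincide, so that the ``bad'' direction $\wt n_1 - \wt n_2$ (and correspondingly the combination of $w$'s that feels the cross term) evolves by pure heat flow and hence its $L^1_y$-norm is controlled without any reaction contribution; alternatively, one checks directly that the matrix of linear coefficients acting on $(w_1,w_2)$ is, after the sign-favorable diagonal part is discarded, effectively handled because $\lan n_\al\ran$ and $\wt n_\al$ solve equations with the \emph{same} reaction term. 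Getting this bookkeeping right — so that no $\ep$-dependent exponential appears and the clean bound \eqref{Linfty_distance_bound_slow_reaction} emerges — is the crux; the rest is routine parabolic $L^1$ estimation.
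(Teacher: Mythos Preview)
Your approach is the same as the paper's, and you have correctly identified the crux: the naive estimate on the cross term $\ep\,\wt n_\al |w_\beta|$ would produce a useless Gr\"onwall factor, and the special symmetric structure of the reaction must be used to avoid it. However, you stop just short of stating the one clean observation that resolves everything, so let me sharpen what you wrote.

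You note that $\wt n_1-\wt n_2$ solves the heat equation. The point you are missing explicitly is that $\lan n_1\ran-\lan n_2\ran$ \emph{also} solves the heat equation: in \eqref{zero_mode_equation} both the reaction term $-\ep\lan n_\al\ran\lan n_\beta\ran$ and the forcing $-\ep\lan n_{1;\neq}n_{2;\neq}\ran$ are symmetric in $\al,\beta$, so they cancel when one subtracts the $\al=2$ equation from the $\al=1$ equation. Since $\wt n_1-\wt n_2$ and $\lan n_1\ran-\lan n_2\ran$ share the same initial data at $t_0$, uniqueness for the heat equation gives
\[
w_1(t,y)\equiv w_2(t,y)\quad\text{for all }t\geq t_0,\ y\in\Torus.
\]
This is the paper's relation \eqref{translation_relation}. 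Once you have it, your ``cross term'' $\wt n_\al w_\beta$ is actually $\wt n_\al w_\al$, i.e.\ diagonal with the good sign, and your splitting collapses to the scalar equation
\[
\pa_t w_1=\nu\pa_{yy}w_1-\ep(\lan n_2\ran+\wt n_1)\,w_1-\ep\lan n_{1;\neq}n_{2;\neq}\ran,
\]
with nonnegative damping coefficient $\lan n_2\ran+\wt n_1\geq 0$. Testing against $\mathrm{sgn}(w_1)$ (or, as the paper does, splitting the solution as $f_1-f_2$ with nonnegative $f_i$ solving the equation with the positive/negative parts of the data and forcing) gives \eqref{Linfty_distance_bound_slow_reaction} directly, with no exponential factor. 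Your phrase ``$\lan n_\al\ran$ and $\wt n_\al$ solve equations with the same reaction term'' is slightly off target: the relevant symmetry is within each system (between $\al=1$ and $\al=2$), not between the two systems.
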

\begin{proof}
The proof is based on the observation that the density differences $\lan n_1\ran-\lan n_2\ran$ and $\wt n_1-\wt n_2$ solve the same equation, i.e.,
\begin{align*}
\pa_t (\widetilde{n}_{1}-\widetilde{n}_{2})=&\nu\de (\widetilde{n}_{1}-\widetilde{n}_{2}),\\
\pa_t (\lan {n}_{1}\ran-\lan{n}_{2}\ran)=&\nu\de (\lan{n}_{1}\ran-\lan{n}_{2}\ran)
\end{align*}
and the two density differences 
share the same initial data. As a result, by uniqueness of heat equation, we have obtained the relation
\begin{align}\label{translation_relation}
\widetilde{n}_{1}(t,y)-\widetilde{n}_{2}(t,y)=\lan{n}_{1}\ran(t,y)-\lan{n}_{2}\ran(t,y), \, \forall t\in[0,\infty), \ \forall y\in\mathbb{T}.
\end{align}
Due to this relation \eqref{translation_relation}, we only need to estimate the $L^1$-distance between one component of the density difference. Without loss of generality, we consider $\lan n_1\ran -\wt n_1$
\begin{align*}
{\pa_t}&(\lan n_1\ran-\wt n_1)\\
=& \nu\de (\lan n_1\ran-\wt n_1)  -\ep(\lan n_1\ran\lan n_2\ran+\lan n_{1;\neq}n_{2;\neq}\ran-\wt n_1\wt n_2) \\
=&\nu\de (\lan n_1\ran-\wt n_1)-\ep \lan n_{1}\ran(\lan n_{2}\ran-\lan n_{1}\ran)+\ep \widetilde{n}_{1}(\widetilde{n}_{2}-\widetilde{n}_{1})-\ep \lan n_{1}\ran^2+\ep\widetilde{n}_{1}^2-\ep\lan n_{1,\neq}n_{2,\neq}\ran  .
\end{align*} 
Now we apply the relation \eqref{translation_relation} and rearrange the terms to obtain that
\begin{align*}
{\pa_t}(\lan n_1\ran&-\wt n_1) \\
=&\nu\de (\lan n_1\ran-\wt n_1)    -\ep (\lan n_2\ran -\lan n_1\ran+\lan n_1\ran +\wt n_1)(\lan n_1\ran-\wt n_1) -\ep\lan n_{1;\neq}n_{2;\neq}\ran\\
=&\nu\de (\lan n_1\ran-\wt n_1)  -\ep (\lan n_2\ran+\wt n_1)(\lan n_1\ran-\wt n_1) -\ep\lan n_{1;\neq}n_{2;\neq}\ran.
\end{align*}
We decompose the solution $\lan n_1\ran-\wt n_1$ as $f_1-f_2$, where the $f_i$'s solve the equations
\begin{align}
\pa_t f_1=&\nu \de f_1 -\ep (\lan n_2\ran+\wt n_1)f_1+\ep\lan n_{1;\neq}n_{2;\neq}\ran^{-},\quad f_1(t_0)=(\lan n_1\ran-\wt n_1)^+(t_0);\\
\pa_t f_2=&\nu \de f_2-\ep  (\lan n_2\ran+\wt n_1)f_2+\ep\lan n_{1;\neq}n_{2;\neq}\ran^{+},\quad f_2(t_0)=(\lan n_1\ran-\wt n_1)^-(t_0).
\end{align} By comparison principle, the $f_i$'s are non-negative. Hence $||\lan n_1\ran-\wt n_1||_1\leq ||f_1||_1+||f_2||_1$. Then integration of the $f_i$-equations in space and time yields the result. 
\end{proof}
 {\begin{lem}\label{lem:wt n_decay} The solutions $\wt n_\al$  to \eqref{1D_dynamics} have the following decay \myr{ for all $ t_0>0,t\geq 0$}:\quad
\begin{align*}
||&\min\{\wt n_1(t_0+t,\cdot), \wt n_2(t_0+t,\cdot)\}||_{L_y^1}\\
=& ||\min\{\wt n_1(t_0,\cdot), \wt n_2(t_0,\cdot)\}||_{L_y^1} -{\epsilon}\int_{t_0}^{t_0+t}\int\wt n_1(s,y) \wt n_2(s,y)dyds+\nu \int_{t_0}^{t_0+t}\sum_{y_i(s)}|\pa_y(\wt n_1-\wt n_2)(s,y_i(s))|ds.
\end{align*} 
Here the set $\{y_i(s)\}$ is the collection of points such that $\wt n_1(s,y_i(s))=\wt n_2(s,y_i(s))$ and  $\pa_y n_1(s,y_i(s))\neq \pa_y n_2(s,y_i(s))$.
\end{lem}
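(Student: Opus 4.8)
The plan is to track $\min\{\wt n_1,\wt n_2\}$ by writing it as $\min\{\wt n_1,\wt n_2\} = \tfrac{1}{2}(\wt n_1 + \wt n_2) - \tfrac{1}{2}|\wt n_1 - \wt n_2|$ and differentiating the two pieces separately in the integrated ($L^1_y$) sense. For the sum, adding the two equations in \eqref{1D_dynamics} gives $\pa_t(\wt n_1+\wt n_2) = \nu\pa_{yy}(\wt n_1+\wt n_2) - 2\ep\wt n_1\wt n_2$, and integrating over $\Torus$ kills the diffusion term, yielding $\frac{d}{dt}\int(\wt n_1+\wt n_2)\,dy = -2\ep\int \wt n_1\wt n_2\,dy$. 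That accounts for the reaction term in the claimed formula.

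The subtle part is the evolution of $\int|\wt n_1 - \wt n_2|\,dy$. Set $w := \wt n_1 - \wt n_2$; subtracting the equations in \eqref{1D_dynamics} shows $w$ solves the pure heat equation $\pa_t w = \nu\pa_{yy} w$ (the reaction terms cancel because both equations carry $-\ep\wt n_1\wt n_2$). So I need $\frac{d}{dt}\int_\Torus |w|\,dy$ where $w$ solves the 1D heat equation. Formally $\frac{d}{dt}\int|w| = \int \operatorname{sgn}(w)\,\nu w_{yy}\,dy$; integrating by parts, $\int \operatorname{sgn}(w) w_{yy} = -\int (\operatorname{sgn}(w))_y w_y$, and since $(\operatorname{sgn}(w))_y$ is supported on the zero set of $w$, this picks up, at each simple zero $y_i(s)$ of $w(s,\cdot)$, a contribution $-2|w_y(s,y_i(s))|$ (the factor $2$ from the jump of $\operatorname{sgn}$ across the zero, and the sign is negative because $w$ changes sign through $y_i$ so $\operatorname{sgn}(w)$ and $w_y$ have the same sign on each side). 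Hence $\frac{d}{dt}\int|w|\,dy = -2\nu\sum_{y_i(s)} |w_y(s,y_i(s))|$. Combining: $\frac{d}{dt}\int \min\{\wt n_1,\wt n_2\}\,dy = -\ep\int\wt n_1\wt n_2\,dy + \nu\sum_{y_i(s)} |\pa_y(\wt n_1-\wt n_2)(s,y_i(s))|$, and integrating from $t_0$ to $t_0+t$ gives the stated identity.

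The main obstacle is making the computation of $\frac{d}{dt}\int|w|\,dy$ rigorous: $|w|$ is not smooth where $w=0$, and the set of zeros can in principle be complicated (e.g.\ intervals where $w\equiv 0$, or zeros that are not transversal). I would handle this by the standard regularization $|w|\approx \sqrt{w^2+\eta^2}$, computing $\frac{d}{dt}\int\sqrt{w^2+\eta^2}\,dy = \nu\int \frac{w w_{yy}}{\sqrt{w^2+\eta^2}}\,dy = -\nu\int \frac{\eta^2 w_y^2}{(w^2+\eta^2)^{3/2}}\,dy$, and then passing $\eta\to 0^+$: away from zeros the integrand $\to 0$, and near a nondegenerate (simple) zero a Laplace-type asymptotic of $\int \frac{\eta^2 w_y^2}{(w^2+\eta^2)^{3/2}}\,dy$ concentrates and converges to $2|w_y(y_i)|$. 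Since $w$ solves the heat equation, for $t_0>0$ it is real-analytic in $y$, so for a.e.\ time its zero set is finite and consists of simple zeros (or $w\equiv 0$, in which case both sides are handled trivially and the sum is empty by the stated convention that only points with $\pa_y\wt n_1\neq\pa_y\wt n_2$ are counted); this analyticity is exactly why the hypothesis $t_0>0$ appears and why one restricts to the points $y_i(s)$ where the normal derivatives differ. The remaining routine check is that $t\mapsto \sum_{y_i(s)}|w_y(s,y_i(s))|$ is integrable in $s$, which follows from parabolic smoothing bounds on $w_y$ together with the fact that the number of zeros of a solution of the 1D heat equation is nonincreasing in time.
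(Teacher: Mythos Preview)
Your proposal is correct and follows essentially the same approach as the paper: the same decomposition $\min\{\wt n_1,\wt n_2\}=\tfrac12(\wt n_1+\wt n_2)-\tfrac12|\wt n_1-\wt n_2|$, the same observation that $w=\wt n_1-\wt n_2$ solves the heat equation, and the same appeal to analyticity of heat solutions (hence the hypothesis $t_0>0$) to ensure finitely many zeros. The only variation is in how the identity $\frac{d}{dt}\int_\Torus|w|\,dy=-2\nu\sum_i|w_y(y_i)|$ is justified: the paper partitions $\Torus$ at the zeros of $w$ and integrates $\frac{w}{|w|}\pa_{yy}w$ by parts directly on each subinterval, whereas you propose the regularization $\sqrt{w^2+\eta^2}$ and a Laplace-type limit.
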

\begin{proof}
Recall that
\begin{align}
\min\{\wt n_1,\wt n_2\}=\frac{\wt n_1+\wt n_2}{2}-\frac{|\wt n_1-\wt n_2|}{2}.
\end{align}
Now we take the time derivative of the $L^1$-norm, and apply the observation that $\wt n_1-\wt n_2$ solves the heat equation to get
\begin{align}
\frac{d}{dt}||\min_{\al=1,2}\{n_\al\}(t_0+t,\cdot)||_1=&\int\frac{\pa}{\pa t}\left(\frac{\wt n_1+\wt n_2}{2}-\frac{|\wt n_1-\wt n_2|}{2}\right)dy\\
=&\int \left( \nu\pa_{yy} \left(\frac{\wt n_1+\wt n_2}{2}\right)-\ep \wt n_1 \wt n_2- \frac{(\wt n_1-\wt n_2)\pa_t(\wt n_1-\wt n_2)}{2|\wt n_1-\wt n_2|} \right) dy\\
=&-\ep \int\wt n_1 \wt n_2dy-\int\nu\frac{(\wt n_1-\wt n_2)\pa_{yy}(\wt n_1-\wt n_2)}{2|\wt n_1-\wt n_2|}dy.\label{ddt_min_L1}
\end{align}
The remaining part of the proof is to understand the last term in \eqref{ddt_min_L1}.

For the sake of notational simplicity, we use $\wt q$ to denote the  difference $\wt q:=\wt n_1-\wt n_2.$ The behavior of the last term in \eqref{ddt_min_L1} is related to the zero points of $\wt q$. Note that at the initial time $t_0$, $\wt n_1(t_0)-\wt n_2(t_0)=\lan n_1-n_2\ran (t_0), \,t_0>0$ and $\lan n_1-n_2\ran(t,y)$ as well as $\wt n_1- \wt n_2$ solve the heat equation on $\rr_+\times \Torus$. As a result, due to analyticity of solutions to heat equation, $\wt q(t_0+t)$ can only have finitely many zero points for $t_0>0,\,t\geq 0$. %
At any fixed instance, we label these finitely many zero points as  $\{y_i(t_0+t)\}_{i=1}^{N(t_0+t)}$ (zeros with multiplicities are labeled only once).  We further partition the torus $[-1/2,1/2]$ into $-\frac{1}{2}=y_0<y_1<y_2<y_3<...<y_{N}<y_{N+1}=\frac{1}{2}$ and define $I_i:=[y_i,y_{i+1}{})$. Note that $y_0=-\frac{1}{2}$ and $y_{N+1}=\frac{1}{2}$ are identified and { are not } the zero points. Since the solution $\wt q$ is smooth, the expression $\frac{\wt q}{|\wt q|}\pa_{yy}\wt q$ is smooth away from the points $\{y_i\}_{i=1}^N$. Moreover, the function $\wt q/|\wt q|$ is constant in the interior of $I_i$, i.e.,  $I_i^o:=(y_i,y_{i+1})$, so we denote it as $\left(\frac{\wt q}{|\wt q|}\right)(I_i^o)$. Also, since $y_0=y_{N+1}$ are 
not zeros in our set-up
 , $\left(\frac{\wt q}{|\wt q|}\right)(I_0^o)=\left(\frac{\wt q}{|\wt q|}\right)(I_{N}^o)$
. Combining the observations above, and the continuity of $\frac{\wt q}{|\wt q|}\pa_y\wt q$ at $y_0=y_{N+1}$ yields that 
\begin{align}
\int_{-1/2}^{1/2} \frac{\wt q}{|\wt q|}&\pa_{y y} \wt q dy=\sum_{i=0}^N\int_{I_i} \frac{\wt q}{|\wt q|}\pa_{y y} \wt q dy=\sum_{i=0}^ N \left(\frac{\wt q}{|\wt q|}\right)(I_i^o)\int_{y_i}^{y_{i+1}}\pa_{yy}\wt qdy \ \\
&=\sum_{i=0}^{N}\left(\frac{\wt q}{|\wt q|}\right) (I_i^o)\left(\pa_y\wt q(t_0+t,y)\bigg|_{y=y_i}^{y=y_{i+1}}\right)\\
&=\left(\frac{\wt q}{|\wt q|}\right)(I_0^o)\lim_{\eta\rightarrow 0+}\left(\pa_y \wt q\left(\frac{1}{2}-\eta\right)-\pa_y \wt q\left(-\frac{1}{2}+\eta\right)\right)\\
&\quad-\sum_{i=1}^{N }\lim_{\eta\rightarrow 0+}\left(\frac{\wt q(y_i+\eta)}{|\wt q(y_i+\eta)|}\pa_y\wt q(y_i+\eta)-\frac{\wt q(y_i-\eta)}{|\wt q(y_i-\eta)|}\pa_y \wt q(y_i-\eta)\right)\\
&=-\sum_{i=1}^{N } \pa_y \wt q(y_i) \lim_{\eta\rightarrow 0+}\left(\frac{\wt q(y_i+\eta)}{|\wt q(y_i+\eta)|}-\frac{\wt q(y_i-\eta)}{|\wt q(y_i-\eta)|}\right)\\
&=-\sum_{i=1}^{N(t_0+t)}2|\pa_y \wt q(t_0+t,y_i(t_0+t)) |.
\end{align}
Combining this calculation with \eqref{ddt_min_L1}, we have that
\begin{align}
{\frac{d}{dt}||\min_{\al=1,2}\{n_\al\}(t_0+t,\cdot)||_1=-\ep \int\wt n_1(t_0+t,y) \wt n_2(t_0+t,y)dy+\sum_{i=1}^{N(t_0+t)}\nu|\pa_y \wt q(t_0+t,y_i(t_0+t)) |.}
\end{align}
Integration in time yields the result.
\end{proof}
\begin{proof} [{Proof of Theorem \ref{thm_3}}]
We organize the proof in three steps.

\noindent
\textbf{Step \#1: Battle plan. }
First we define
\begin{align}
G:=\frac{1}{12}\norm{\min_{\al\in\{1,2\}}\lan n_{\al;0 }\ran}_{L_y^1}.
\end{align}
Hence the goal \eqref{comsumption_est_thm_3} can be interpreted as
\begin{align}\label{goal_shear}
||\lan n_{1;0}\ran||_{L_y^1}-||\lan n_1\ran(\mathcal T_0+\mathcal{T}_1)||_{L_y^1}\geq  G.
\end{align}
The strategy is similar to the one in Theorem \ref{thm_1}. Namely, we decompose the time horizon $[0,\mathcal{T}_0+\mathcal{T}_1]$ into two parts, i.e., $[0,\mathcal{T}_0)$ and $[\mathcal{T}_0,\mathcal{T}_0+\mathcal{T}_1]$. The enhanced dissipation estimates will be derived on the first interval and the reaction will be exploited on the second.

Next we make one further simplification. Same as before, we define the total reacted mass $Q(t)$, which is increasing in time,
\begin{align}
 Q(t):=\ep \int_0^{t}\int n_1n_2\, dxdy\,ds =||n_{1;0}||_{1}-|| n_{1}(t)||_1. 
\end{align}
Note that if there exists $t\in[0, \mathcal{T}_0+\mathcal{T}_1]$ such that $Q(t)\geq G$, then
\begin{align}
G\leq Q(t)\leq Q(\mathcal{T}_0+\mathcal{T
}_1)=||n_{1;0}||_1-||n_1(\mathcal T_0+\mathcal{T
}_1)||_1,
\end{align}which is the result \eqref{goal_shear}.
Therefore, it is enough to prove \eqref{goal_shear} under the assumption
\begin{align}
G \geq& Q(t),\quad \forall t\in[0,\mathcal T_0+\mathcal T_1].\label{Q_assumption}
\end{align}
This concludes step \# 1.

\noindent
\textbf{Step \# 2: Enhanced dissipation estimates on $[0,\mathcal{T}_0]$.}   Consider the solutions $\wh n_1,\, \wh n_2$ to the passive scalar equations\begin{align}
\pa_t\wh n_\al +u(y)\pa_x \wh n_\al=\nu \de \wh n_\al, \quad \wh n_\al(t=0,\cdot )=n_{\al;0}(\cdot),\quad \al\in\{1,2\}. 
\end{align}
The same argument as in the proof of \eqref{Q_difference} yields   that
\begin{align}\label{Dist_n_wh_n}
||n_{\al;0}||_{1}-|| n_{\al}(t)||_1=||\wh n_\al-n_\al||_1(t)=Q(t),\quad \al\in\{1,2\}.
\end{align}

Since the difference $q_{\neq}$ and  the  approximation $\wh n_{\al;\neq}$ solve the passive scalar equations, the enhanced dissipation  estimate \eqref{Defn:d_nu_shear_RE} applies, i.e., $||q_{\neq}(t)||_2\leq C ||q_{\neq}(0)||_2e^{-\delta d(\nu)t}$, $||\wh n_{\al;\neq}(t)||_2\leq C||\wh n_{\al;\neq}(0)||_2e^{-\delta d(\nu) t}$. By choosing the universal constant $C$ in the definition of $\mathcal{T}_0$ \eqref{t_0} large enough,  we have the following estimates at time $\mathcal T_0$,
\begin{align}\label{T_0_configuration}
 &|| q_{\neq}(\mathcal T_0)||_2\leq \frac{ \delta d(\nu)G}{121\ep\sum_{\al }||n_{\al;0}||_2};\ \
\sum_\al||\wh n_{\al;\neq}(\mathcal T_0)||_2\leq\frac{1}{121}{G}.
\end{align}
Moreover, on the time interval $[0,\mathcal T_0]$, we use Lemma \ref{lem:wt n_decay} with $\wt n_\al=\lan \wh n_\al \ran$, $\ep=0$ and $t_0=\mathcal T_0$ to obtain that
\begin{align}
||\min\{\lan\wh n_1\ran,\lan\wh n_2\ran\}(\mathcal T_0)||_{L^1_y}\geq ||\min\{\lan n_{1;0}\ran, \lan n_{2;0}\ran \}||_{L^1_y}=12G.
\end{align}
By recalling the relations
\begin{align}
\min\{\lan  n_1\ran,\lan  n_2\ran\}=\frac{\lan n_1\ran+\lan n_2\ran}{2}-\frac{|\lan n_1\ran-\lan n_2\ran|}{2},\quad \lan n_1\ran-\lan n_2\ran=\lan \wh n_1\ran-\lan \wh n_2\ran,
\end{align}
and combining them with \eqref{Dist_n_wh_n} 
and \eqref{Q_assumption}, we end up with
\begin{align}\label{L_1_0_and_T_0}
||\min\{\lan n_1\ran, \lan n_2\ran\}(\mathcal T_0)||_{L_y^1}\geq \frac{11}{12}||\min\{\lan n_{1;0}\ran, \lan n_{2;0}\ran \}||_{L_y^1}.
\end{align}
This concludes step \# 2.

\noindent
\textbf{Step \# 3: Reaction estimates on $[\mathcal{T}_0,\mathcal T_0+\mathcal T_1]$. }
On the second time interval, we compare $\lan n_1\ran$ to the solution $\wt n_1$ of the $1$D-system $\eqref{1D_dynamics}_{t_0=\mathcal T_0}$. To estimate their deviation, we first invoke the enhanced dissipation of $q_{\neq}$ \eqref{Defn:d_nu_shear_RE} and the estimate \eqref{T_0_configuration} to obtain
\begin{align*}
Q(\mathcal T_0+\mathcal T_1)-&Q(\mathcal T_0)=\ep\int_{\mathcal T_0}^{\mathcal T_0+\mathcal T_1}\int \lan n_1n_2\ran dydt\\
= &\ep\int_{\mathcal T_0}^{\mathcal T_0+\mathcal T_1}\int \lan n_1\ran \lan n_2\ran dydt+\ep\int_{\mathcal T_0}^{\mathcal T_0+\mathcal T_1} \int \lan q_{\neq}n_{2;\neq}\ran dydt+\ep\int_{\mathcal T_0}^{\mathcal T_0+\mathcal T_1}\int \lan n_{2;\neq}^2\ran dydt\\
\geq&\ep\int_{\mathcal T_0}^{\mathcal T_0+\mathcal T_1}\int \lan n_1\ran \lan n_2\ran dydt+\ep\int_{\mathcal T_0}^{\mathcal T_0+\mathcal T_1}\int \lan n_{2;\neq}^2\ran dydt-G/120. {}
\end{align*}
This estimate, when combined with Lemma \ref{lem:distance-nal0-intermediate}, yields the $L^1$-deviation control
\begin{align}
||\lan n_1\ran& -\wt n_1||_1(\mathcal T_0+t)\leq2\ep \int_{\mathcal T_0}^{\mathcal T_0+t}\int |\lan n_{1;\neq}n_{2;\neq}\ran| dyds \\
\leq&2\ep  \int_{\mathcal T_0}^{\mathcal T_0+t}\int| \lan n_{2;\neq}q_{\neq}\ran |d yds+2\ep\int_{\mathcal T_0}^{\mathcal T_0+t}\int\lan n_{2;\neq}^2\ran dyds\\
\leq &2Q(\mathcal T_0+t)-2Q(\mathcal T_0)+4G/120,\quad \forall t\in[0,\mathcal T_1=\mathcal{B}_1\ep^{-1}].\label{lan n_1 ran-wt_n_1_L1}
\end{align}

{Now we consider the total reacted mass associated with $1$D-system \eqref{1D_dynamics},
\begin{align}\label{I_t}
I(t)=\ep\int_{\mathcal T_0}^{\mathcal T_0+t}\int \wt n_1(s,y) \wt n_2(s,y)dyds.
\end{align}
 Recalling the 1-dimensional  equation \eqref{1D_dynamics} and direct $L^1$-estimate  yield the following relation
\begin{align}\label{I_t_relation}
||\wt n_{1}(\mathcal T_0)||_{L_y^1}-||\wt n_1(\mathcal T_0+t)||_{L_y^1}=||\wt n_{2}(\mathcal T_0)||_{L_y^1}-||\wt n_{2}(\mathcal T_0+t)||_{L_y^1}=I(t),\quad \forall t\in[0,\infty).
\end{align}
Therefore, given \eqref{lan n_1 ran-wt_n_1_L1}, to estimate the chemical consumed along the dynamics, it is enough to consider the time evolution of $I(t)$. By Lemma \ref{lem:wt n_decay}, we have that
\begin{align}
\int \min\{\wt n_1, \wt n_2\}(\mathcal T_0+t,y)dy+I(t)=
\int \min\{\wt n_{1}, \wt n_{2}\}(\mathcal T_0,y)dy+\int_{\mathcal T_0}^{\mathcal T_0+t}\nu \sum_{y_i}|\pa_y (\wt n_1(s,y_i)-\wt n_2(s,y_i))|ds.
\end{align}Here the $y_i$'s are specified in Lemma \ref{lem:wt n_decay}. 

We recall the definition of $\mathcal{B}_1$ in Theorem \ref{thm_3} and distinguish between two cases on the time interval $[\mathcal T_0,\mathcal T_0+\mathcal{B}_1\ep^{-1}]$.

\noindent
\textbf{Case a):} If there exists a constant $\mathcal{B}_2\in (0,\mathcal{B}_1]$
such that at time $\mathcal{B}_2 \ep^{-1}$, the following estimate holds
\begin{align}
\int \min\{\wt n_1, \wt n_2\}(\mathcal T_0+\mathcal{B}_2 \ep^{-1},y)dy\leq \frac{1}{2}\int \min\{\wt n_{1}, \wt n_{2}\}(\mathcal T_0,y)dy.
\end{align}
Then using \eqref{I_t_relation} and \eqref{L_1_0_and_T_0}, we obtain that
\begin{align}
I(\mathcal{B}_1\ep^{-1})\geq I(\mathcal{B}_2\ep^{-1})\geq \frac{1}{2}\int \min\{\wt n_{1}, \wt n_{2}\}(\mathcal T_0,y)dy\geq\frac{11}{2}G.
\end{align}
Hence by \eqref{I_t_relation}, we have a bound for the reacted total mass
\begin{align}
||\wt n_{1}(\mathcal T_0)||_{L_y^1}-||\wt n_1(\mathcal T_0+\mathcal{B}_1\ep^{-1})||_1\geq \frac{1}{2}\int \min\{\wt n_{1},\wt n_{2}\}(\mathcal T_0,y) dy\geq\frac{11}{2}G. 
\end{align}
Assumption \eqref{Q_assumption} and $L^1$-control \eqref{lan n_1 ran-wt_n_1_L1} yields that $||\lan n_1\ran(\mathcal T_0+t)-\wt n_1(\mathcal T_0+t)||_{L_y^1}\leq \frac{61}{30}G,\quad \forall t\in[0,\mathcal T_1]$. Hence, we have that
\begin{align}
||\lan n_{1;0}\ran||_{L_y^1}-||\lan n_1\ran(\mathcal T_0+\mathcal T_1)||_{L_y^1}\geq||\lan n_1\ran(\mathcal T_0)||_{L_y^1}-||\lan n_1\ran(\mathcal T_0+\mathcal{B}_1\ep^{-1})||_{L_y^1}\geq  G.
\end{align}
This concludes the proof in case a).

\noindent
\textbf{Case b):}  On the other hand, if on the time interval $[0, \mathcal{B}_1\ep^{-1}]$ the following estimate holds
\begin{align*}
\int \min\{\wt n_1, \wt n_2\}(\mathcal T_0+t,y)dy\geq \frac{1}{2}\int \min\{\wt n_{1}, \wt n_{2}\}(\mathcal T_0,y)dy,\quad \forall t\in [0, \mathcal{B}_1\ep^{-1}],
\end{align*}
then we can estimate $I(\mathcal{B}_1\ep^{-1})$ with H\"older's inequality as follows:
\begin{align}
I(\mathcal{B}_1\ep^{-1})\geq &\ep \int_0^{\mathcal{B}_1\ep^{-1}}\int_\Torus  \min\{\wt n_1, \wt n_2\}^2(\mathcal T_0+s,y) dyds\geq{\epsilon}\int_0^{\mathcal{B}_1\ep^{-1}}\left(\int_\Torus \min\{\wt n_1, \wt n_2\}(\mathcal T_0+s,y)dy\right)^2ds\\
\geq& \frac{\mathcal{B}_1}{4}\left(\int_\Torus \min\{\wt n_{1}, \wt n_{2}\}(\mathcal T_0,y)dy\right)^2.\label{I_T_0_to_T_1}
\end{align}
By \eqref{L_1_0_and_T_0}, we choose the universal constant $C$ in the definition of $\mathcal{B}_1$ \eqref{t_1} large enough so that
\begin{align}
\mathcal{B}_1\geq\max\{5,4||\min\{\lan n_1\ran,\lan n_2\ran\}(\mathcal T_0)||_{L^1_y}^{-1}\}=\max\{5,4||\min\{\wt n_1,\wt n_2\}(\mathcal T_0)||_{L_y^1}^{-1}\}.
\end{align}
Now if  $||\min\{\wt n_{1}, \wt n_{2}\}(\mathcal T_0)||_{L_y^1}\geq 1$, then  because $\mathcal{B}_1\geq 5$, the right hand side of \eqref{I_T_0_to_T_1} is greater than $\frac{1}{2} ||\min\{\wt n_{1}, \wt n_{2}\}(\mathcal T_0)||_{L_y^1}$ . If $0 <||\min\{\wt n_{1}, \wt n_{2}\}(\mathcal T_0)||_{L_y^1}\leq 1$,  the  choice $\dss \mathcal{B}_1\geq \frac{4}{||\min\{\wt n_{1}, \wt n_{2}\}(\mathcal T_0)||_{L^1_y}}$ yields the same lower bound as in the first case. To conclude, we have obtained the following estimate
\begin{align}
I(\mathcal{B}_1\ep^{-1})\geq\frac{1}{2}\int_\Torus \min\{\wt n_{1}, \wt n_{2}\}(\mathcal T_0,y)dy.
\end{align}
Now an application of the argument in case a) yields the result.
}\end{proof}
\appendix
\section{Appendix}
\subsection{Proof of the Enhanced Dissipation Estimate for Alternating Shear Flow}
In this section, we prove the enhanced dissipation estimate \eqref{ED_alternating_shear}. 

{We first consider the case $s,t\in 2K\nu^{-1/2}\mathbb{N}$ and comment on the general case at the end.  In this special case, the estimate \eqref{ED_alternating_shear} is guaranteed by the following
\begin{align}\label{goal_appendix_0}
||f_\sim(s+t)||_2\leq 2^{-\frac{t}{2K\nu^{-1/2}}}||f_\sim(s)||_2,\quad \forall s,t\in 2K\nu^{-1/2}\mathbb{N}.
\end{align}
For the sake of notation  simplicity, we drop the $(\cdot)_\sim$ notation in the appendix. Without loss of generality, we set $s=0$. Since the flow is time-periodic with period $2K\nu^{-1/2}$, it is enough to prove 
\begin{align}\label{goal_appendix_1}
||f(2K\nu^{-1/2})\leq \frac{1}{2}||f(0)||_2,
\end{align} 
given that $K,\,\nu^{-1}$ are chosen large enough. }
We decompose the interval $[0, 2K\nu^{-1/2}]$ into two parts
\begin{align}
[0, 2K\nu^{-1/2}]=[0,K\nu^{-1/2})\cup [K\nu^{-1/2},2K\nu^{-1/2}]. \label{Two_intervals_app}
\end{align}

On the interval $[0,K\nu^{-1/2})$
, the shear flow is given by 
\begin{align}
u(\tau,x,y)=\varphi_0(\tau)(\sin(2\pi y),0),
\end{align} where $\varphi_0$ is the $C^\infty$ time cut-off. We decompose the solution into the $x$-average and the $x$-remainder:
\begin{align}
f(\tau,x,y)=\lan f\ran_x(\tau,y)+f_{\neq_x}(\tau,x,y),\quad \lan f\ran_x(\tau,y)=\int_{-1/2}^{1/2} f(\tau,x,y)dx, \quad \iint f(\tau,x,y)dxdy=0.
\end{align}
Note that these two parts solve the \emph{decoupled} equations:
\begin{align}
\pa_\tau \lan f\ran_x=&\nu\pa_{yy} \lan f\ran_x,\quad \lan f\ran_x(0,y)=\lan f_0\ran_x(y);\\
\pa_\tau f_{\neq_x}+\varphi_0(\tau)\sin(2\pi y)\pa_x f_{\neq_x}=&\nu\de f_{\neq_x},\quad f_{\neq_x}(\tau=0,x,y)=(f_0)_{\neq_x}(x,y).
\end{align}
We focus on the remainder part. Recalling the enhanced dissipation estimate for shear flows $\eqref{ED_shear_introduction}_{j_m=1}$,  the non-expansive nature of the $ L^2$-norm  along the dynamics, and the fact that $\varphi_0(\tau)=1$ for $\forall \tau\in[\frac{1}{3}K\nu^{-1/2},\frac{2}{3}K\nu^{-1/2}]$, we have that for $\nu\in(0,\nu_0]$ and $K$ chosen large enough, 
\begin{align}
||f_{\neq_x}(K\nu^{-1/2})||_{L ^2}\leq& \norm{f_{\neq_x}\left(\frac{2}{3}K\nu^{-1/2}\right)}_{L ^2}
\leq C e^{-\delta \nu^{1/2}(K\nu^{-1/2}/3)}\norm{f_{\neq_x}\left(\frac{1}{3}K\nu^{-1/2}\right)}_{L ^2}\\
\leq&  \frac{1}{16}||f_{\neq_x}(0)||_{L^2}\leq\frac{1}{8}||f(0)||_{L^2}.\label{Time_1_est_app}
\end{align}
Here we take $0<\nu\leq \nu_0$ and $K\geq 3\delta^{-1}\log(16 C)$.

Now on the time interval $[K\nu^{-1/2},2K\nu^{-1/2}] $ in the decomposition \eqref{Two_intervals_app}, similarly to the previous argument, we decompose the solution into the following two parts
\begin{align}
\lan f\ran_y(\tau,x)=\int_{-1/2}^{1/2}  f(\tau,x,y)dy,\quad f_{\neq_y}(\tau,x,y)=f(\tau,x,y)-\lan f\ran_y(\tau,y).
\end{align}
Now the two quantities solve separate equations:
\begin{align}
&\pa_\tau \lan f\ran_y=\nu\pa_{xx} \lan f\ran_y;\ \
\pa_\tau f_{\neq_y}+\varphi_1(\tau)\sin(2\pi x)\pa_y f_{\neq_y}=\nu\de f_{\neq_y},
\end{align} which initiate at time $K\nu^{-1/2}$.
Note that due to the zero average constraint $\iint f(\tau,x,y) dxdy\equiv 0$, we have the relation $\lan \lan f\ran_x\ran_y=0$. Then as a consequence of \eqref{Time_1_est_app},  the non-increasing of the $L^2$-norm for solutions of the heat equation, and H\"older inequality we obtain that
\begin{align}
||\lan f\ran_y&(\tau)||_{L_{x}^2}\leq ||\lan f\ran_y(K\nu^{-1/2})||_{L_{x}^2}=||\lan \lan f\ran_x +f_{\neq_x}\ran_y (K\nu^{-1/2})||_{L_{x}^2}\\
=&||\lan f_{\neq_x}\ran_y(K\nu^{-1/2})||_{L_x^2} \leq || f_{\neq_x}(K\nu^{-1/2})||_{L^2_{x,y}}\leq \frac{1}{8}||f(0)||_{L^2_{x,y}},\quad \forall \tau\in[K\nu^{-1/2},2K\nu^{-1/2}].\label{rho_y_average_1}
\end{align}
Now for the remainder $f_{\neq_y}$, we use the enhanced dissipation estimate $\eqref{ED_shear_introduction}_{j_m=1}$, the non-expansive nature of the $L^2$ norm of solution, and the fact that $\varphi_1(\tau)=1,\ \forall{\tau}\in [4K\nu^{-1/2}/3, 5K\nu^{-1/2}/3]$ to obtain the following for $\nu\in(0,\nu_0],$ and $K\geq 3\delta^{-1}\log (16C)$: 
\begin{align}
 ||f_{\neq_y}( 2K\nu^{-1/2})||_{L_{x,y}^2}\leq Ce^{-\delta \nu^{1/2}(K\nu^{-1/2}/3)}||f_{\neq_y}(K\nu^{-1/2})||_{L_{x,y}^2}\leq \frac{1}{8}||f(0)||_{L_{x,y}^2}.&
\end{align}
Now combining the estimate with  \eqref{rho_y_average_1}, we obtain that
\begin{align}
||f(2K\nu^{-1/2})||_{L_{x,y}^2}\leq &||\lan f\ran_y(2K\nu^{-1/2})||_{L_{x,y}^2}+||f_{\neq_y}(2K\nu^{-1/2})||_{L_{x,y}^2}\leq\frac{1}{2}||f(0)||_{L_{x,y}^2}.
\end{align}
This concludes the proof of \eqref{goal_appendix_1} . 

For general $s,t \geq 0 $, we find the smallest integer $N$ and largest integer $M$ so that
\begin{align}
2KN\nu^{-1/2}\geq s,\quad 2KM\nu^{-1/2}\leq s+t,\quad M, N\in \mathbb{N}.
\end{align} Here $K$ is the same constant in the above analysis.
Note that if $t\leq 4K\nu^{-1/2}$, then the estimate \eqref{ED_alternating_shear} is direct:
\begin{align}
||f(s+t)||_2\leq ||f(s)||_2\leq 4||f(s)||_2 e^{-\frac{\log 2}{2K} \nu^{1/2}t},\quad 0\leq t\leq 4K\nu^{-1/2}.
\end{align}
Hence we assume $t>4K\nu^{-1/2}$ and observe that $2K\nu^{-1/2}(M-N)\geq t-4K\nu^{-1/2}$. Now we apply the estimate  \eqref{goal_appendix_0} with $s,t\in 2K\nu^{-1/2} \mathbb{N}$, and the non-increasing nature of $L^2$-norm of the solutions to derive that
\begin{align*}
||f(s+t)||_2\leq&||f(2KM\nu^{-1/2})||_2\leq ||f(2KN\nu^{-1/2})||_22^{-(M-N)}\leq ||f(s)||_2 e^{-\frac{\log 2}{2K}\nu^{1/2}2K\nu^{-1/2}(M-N)}\\%
\leq&||f(s)||_2 e^{-\frac{\log 2}{2K} \nu^{1/2} (t-4K\nu^{-1/2})}=4||f(s)||_2e^{-\frac{\log 2}{2K}\nu^{1/2} t},\quad \forall s,t\geq 0.
\end{align*}This concludes the proof of (\ref{ED_alternating_shear}) in the general case.

{\bf Acknowledgement}. \rm
The authors acknowledge partial support of the NSF-DMS grants 1848790, 2006372 and 2006660.
AK has been partially supported by Simons Fellowship. 
\myr{We would like to thank the anonymous referees and the associate editor for their thorough reading of the manuscript and many helpful suggestions.}
\ifx
\noindent
\textbf{Three Dimension:}
By the same type of argument, we might get the enhance dissipation of alternating shear flow in dimension three. The flow we will be considering
\begin{align}\label{alternating_shear_3D}
\mathbf{u}( t,x,y)=\sum_{k=0}^\infty&\varphi_{3k}(t)(\sin(y),0)+\sum_{k=0}^\infty\varphi_{3k+1}(t)(0, \sin(z),0)+\sum_{k=0}^\infty\varphi_{3k+2}(t)(0,0 ,\sin(x)),
\end{align}
where $\varphi_{\ell}$ is defined in \eqref{alternating_shear}:
\begin{align}
&\varphi_{\ell}(t)=\left\{\begin{array}{ccc}\ba 1,&\quad t\in [(\ell+1/3)C_\mathbf{u}\delta^{-1}\nu^{-1/2},(\ell+2/3)C_\mathbf{u}\delta^{-1}\nu^{-1/2}],\\
\mathrm{smooth},&\quad [\ell C_\mathbf{u} \delta^{-1}\nu^{-1/2}, (\ell+1/3)C_\mathbf{u}\delta^{-1}\nu^{-1/2}]\cup [(\ell+2/3)C_\mathbf{u}\delta^{-1}\nu^{-1/2},(\ell+1)C_\mathbf{u}\delta^{-1}\nu^{-1/2}),\\
0,&\quad \mathrm{others},\ea\end{array}\right.\\
&\varphi_\ell\in C_c^\infty,\quad \mathrm{support}( \varphi_\ell)\cap \mathrm{support}(\varphi_{\ell+1})= \emptyset,\quad\forall \ell\in\mathbb{N}.
\end{align}
Here $C_\mathbf{u}$ is a universal constant greater than $1$.

Same as in the previous argument, we decompose the time interval $[0,3C_\mathbf{u} \delta^{-1}\nu^{-1/2}]$ into three intervals:
\begin{align}
[0,3C_\mathbf{u} \delta^{-1}\nu^{-1/2}]=[0,C_\mathbf{u} \delta^{-1}\nu^{-1/2}]\cup [ C_\mathbf{u} \delta^{-1}\nu^{-1/2},{2}C_\mathbf{u} \delta^{-1}\nu^{-1/2}]\cup[2C_\mathbf{u} \delta^{-1}\nu^{-1/2},3C_\mathbf{u} \delta^{-1}\nu^{-1/2}]=:I_1\cup I_2\cup I_3.
\end{align}
On the first time interval $[0,C_\mathbf{u} \delta^{-1}\nu^{-1/2}]$, we decompose the solution into two parts:
\begin{align}
\lan f\ran_{x}(y,z)=\frac{1}{2\pi}\int_{-\pi}^\pi f(x,y,z)dx,\quad f_{\neq_x}(x,y,z)=f(x,y,z)-\lan f\ran_{x}(y,z).
\end{align}
Moreover, we define $\lan f\ran_y,\, \,\lan f\ran_z$ and $f_{\neq_y},\, \,f_{\neq_z}$ in the same fashion.
Similar to the argument in 2D,  the $x-$average and the remainder solve separate equations and the remainder part undergoes enhanced dissipation. Now at the end of the time interval $I_1$, we have that (The physical variables $x,y,z$ correspond to the Fourier variables $k,\ell, j$, respectively. We absorb the universal constants $2\pi$ of the Fourier transform in the constant $C_{ED}$.)
\begin{align}
\sum_{k\neq 0}|\whf_{k,0, 0}(C_\mathbf{u} \delta^{-1}\nu^{-1/2})|^2\leq C_{ED}e^{-\delta \nu^{1/2}(C_\mathbf{u} \delta^{-1}\nu^{-1/2})/3}||f(0)||_2^2\leq \frac{1}{256}||f(0)||_2^2.
\end{align}
Moreover, since the expression $\sum_{k\neq0}e^{2\pi i k x}\whf_{k,0,0}(t)$ is in the kernel of the differential operators $\sin(z)\pa_y$ and $\sin(x)\pa_z$, we apply the dissipative nature of the equation to obtain the following
\begin{align}
\sum_{k\neq 0}|\whf_{k,0, 0}(t)|^2\leq \frac{1}{256}||f(0)||_2^2,\quad \forall t\in [C_\mathbf{u} \delta^{-1}\nu^{-1/2}, 3C_\mathbf{u} \delta^{-1}\nu^{-1/2}].\label{piece_1}
\end{align}
On the time interval $I_2$, we spit the solution into the $y-$average $\lan f\ran_y$ and the remainder $f_{\neq_y}$. One observes enhanced dissipation in the remainder $f_{\neq_y}$. As a result, we have the following estimate
\begin{align}
\sum_{\ell\neq 0}\sum_{k\in \mathbb{Z}}|\wh f_{k,\ell,0}(2C_{\mathbf{u}}\delta^{-1}\nu^{-1/2})|^2\leq C_{ED}e^{-\delta\nu^{1/2}(C_{\mathbf{u}}\delta^{-1}\nu^{-1/2})/3}||f(0)||_2^2\leq\frac{1}{256}||f(0)||_2^2.\label{piece_2}
\end{align}
Combining the estimates \eqref{piece_1}, \eqref{piece_2} and the zero average condition $\wh f_{0,0,0}\equiv 0$, we have that the following estimate on the $z-$average holds:
\begin{align}
||\lan f\ran_z(2C_\mathbf{u}\delta^{-1}\nu^{-1/2})||_2^2=C_{Fourier}\sum_{(k,\ell)\in \mathbb{Z}^2}|\whf_{k,\ell,0}(C_\mathbf{u}\delta^{-1}\nu^{-1/2})|^2\leq \frac{C_{Fourier}}{128}||f(0)||_2^2.
\end{align}
Now observe that the $z-$average solves the heat equation on the time interval $I_3$, the following estimate holds:
\begin{align}
||\lan f\ran_z(3C_\mathbf{u}\delta^{-1}\nu^{-1/2})||_2^2\leq \frac{C_{Fourier}}{128}||f(0)||_2^2.
\end{align}
Since the $z-$remainder $f_{\neq_z}$ undergoes enhanced dissipation, we have that
\begin{align}
||f_{\neq_z}(3C_\mathbf{u}\delta^{-1}\nu^{-1/2})||_2
\leq C_{ED}e^{-\delta\nu^{1/2}(C_\mathbf{u} \delta^{-1}\nu^{-1/2})/3}||f(0)||_2\leq \frac{1}{64}||f(0)||_2.
\end{align}
Now combining the estimate on $\lan f\ran_z$ and $f_{\neq_z}$, we obtain that
\begin{align}
||f(3C_{\mathbf{u}}\delta^{-1}\nu^{-1/2})||_2\leq\frac{1}{2}||f(0)||_2.
\end{align}
This concludes the proof of the enhanced dissipation in 3-dimension.

This method of constructing the alternating shear flows gives concrete examples of smooth fluid flows  which are powerful enough to do the dimension reduction in various nonlinear models.
\fi

\bibliographystyle{abbrv}
\bibliography{nonlocal_eqns,JacobBib,SimingBib}

\end{document}